\newcommand{\ass}{\quad\mbox{as}\quad}
\long\def\hide#1{}
\newcommand{\inn}{{\quad\hbox{in } }}
\newcommand{\ttt}{\tilde }
\newcommand{\LL}{{\mathcal L}  }
\newcommand{\TT}{{\mathcal T}  }
\newcommand{\nn}{ {\nabla}  }
\newcommand{\pp}{ {\partial} }
\newcommand{\vp}{\varphi}
\newcommand{\RR}{{{\mathcal R}}}
\newcommand{\R} {\mathbb R}
\newcommand{\Z} {\mathbb Z}
\newcommand{\cuad}{{\sqcap\kern-.68em\sqcup}}
\newcommand{\foral}{\quad\mbox{for all}\quad}
\newcommand{\ve}{\varepsilon}
\newcommand{\be}{\begin{equation}}
\newcommand{\ee}{\end{equation}}
\newcommand{\equ}[1]{(\ref{#1})}
\renewcommand{\div}{{\rm div}}
\newcommand{\curl}{\mathop{\rm curl}}
\newtheorem{lemma}{Lemma}[section]
\newtheorem{prop}{Proposition}[section]
\newtheorem{theorem}{Theorem}
\newtheorem{remark}{Remark}[section]
\newcommand{\bremark}{\begin{remark} \em}
\newcommand{\eremark}{\end{remark} }
\numberwithin{equation}{section}
\begin{document}

\title[Travelling helices and the vortex filament conjecture]{Travelling helices and the vortex filament conjecture in the incompressible  Euler equations }

\author[J. D\'avila]{Juan D\'avila}
\address{\noindent  J.D.: Department of Mathematical Sciences University of Bath,
Bath BA2 7AY, United Kingdom
}
\address{\noindent Instituto de Matem\'aticas, Universidad de Antioquia, Calle 67, No. 53--108, Medell\'\i{}n, Colombia
}
\email{jddb22@bath.ac.uk}

\author[M. del Pino]{Manuel del Pino}
\address{\noindent M.dP.:  Department of Mathematical Sciences University of Bath,
Bath BA2 7AY, United Kingdom}
\email{m.delpino@bath.ac.uk}

\author[M. Musso]{Monica Musso}
\address{\noindent M.M.:  Department of Mathematical Sciences University of Bath,
Bath BA2 7AY, United Kingdom}
\email{m.musso@bath.ac.uk}

\author[J. Wei]{Juncheng Wei}
\address{\noindent J.W.: Department of Mathematics University of British Columbia, Vancouver, BC V6T 1Z2, Canada
}  \email{jcwei@math.ubc.ca}

\maketitle

\begin{abstract}
We consider the Euler equations in $\R^3$ expressed in vorticity form
$$\left\{
\begin{aligned}
\vec \omega_t  +
({\bf u}\cdot \nn ){\vec \omega}
&=( \vec \omega \cdot \nn ) {\bf u}  &&   \\
\quad {\bf u}  = \curl \vec \psi,\ &
%{\bf u}  =&  \curl \vec \psi
-\Delta \vec \psi =  \vec \omega . &&
\end{aligned} \right.
$$
A classical question that goes back to Helmholtz is to describe the evolution of solutions with a high concentration around a curve. The work of Da Rios in 1906 states that such a curve must evolve by
the so-called binormal curvature flow.
%$
%\gamma_\tau = \gamma_s\times \gamma_{ss}$, $\tau = %|\log\ve |t $, where $\ve $ is the radius of the region where the solution is concentrated.
Existence of true solutions concentrated near a given curve that evolves by this law
is a long-standing open question that has only
been answered for the special case of a circle travelling with constant speed along its axis, the thin vortex-rings.
We provide what appears to be the first rigorous construction of {\em helical filaments}, associated to a translating-rotating helix. The solution is defined at all times and does not change form with time. The result generalizes to multiple polygonal helical filaments travelling and rotating together.

\end{abstract}

\section{Introduction}

We consider the 3-dimensional Euler equations for an ideal incompressible homogeneous fluid  in a time  interval $(0,T)$ and a smooth initial condition $u_0$  given by
\be \label{euler0}
\begin{aligned}
{\bf u}_t  + ({\bf u}\cdot \nn ){\bf u} &= -\nn p  &&\inn \R^3 \times (0,T) , \\
{\rm div}\, {\bf u} &= 0  && \inn \R^3\times (0,T) , \\
{\bf u}(\cdot,0)  &= u_0 && \inn \R^3 .
\end{aligned}
\ee
For a solution  ${\bf u }$ of  \equ{euler0},
its vorticity is defined as
$ \vec\omega = \curl {\bf u}  $. Then $\vec\omega$ solves the Euler system in {\em vorticity form}  \equ{euler0},
\be \label{euler}
\begin{aligned}
\vec \omega_t  +
({\bf u}\cdot \nn ){\vec \omega}
&=( \vec \omega \cdot \nn ) {\bf u}  &&  \inn \R^3\times (0,T), \\
\quad {\bf u}  = \curl \vec \psi,\ &
%{\bf u}  =&  \curl \vec \psi    \inn \R^3\times (0,T)
-\Delta \vec \psi =  \vec \omega  && \inn \R^3\times (0,T),\\
{\vec \omega}(\cdot,0)  &=  \curl u_0 && \inn \R^3 .
\end{aligned}
\ee
We are interested in solutions of the Euler equations whose vorticities are large and uniformly concentrated near an evolving smooth curve $\Gamma(t)$ embedded in entire $\R^3$ and so that the associated velocity field vanishes as the distance to the curve goes to infinity. This type of solutions, {\em vortex filaments}, are classical objects of study in fluid dynamics, whose analysis traces back to Helmholtz and Kelvin. In 1867 Helmholtz considered with great attention the situation where the vorticity is concentrated in a circular vortex filament with small section. He detected that these {\em vortex rings}  have an approximately steady form and travel with a large constant velocity along the axis of the ring. In 1894, Hill found an explicit axially symmetric solution of \equ{euler} supported in a sphere (Hill's vortex ring).

In 1970, Fraenkel \cite{fraenkel} provided the first construction of a vortex ring concentrated around  a torus with fixed radius with a small, nearly singular section $\ve>0$, travelling with constant speed $\sim |\log\ve |$, rigorously establishing the behaviour predicted by Helmholtz. Vortex rings have been analyzed in larger generality in  \cite{fraenkel-berger,norbury,ambrosetti-struwe,devaleriola-vanschaftingen}.

\medskip
 Da Rios  \cite{darios}  in 1906, and  Levi-Civita \cite{levicivita1908} in 1908, formally found the general
law of motion of a vortex filament with a thin section of radius $\ve>0$, uniformly distributed around an evolving curve $\Gamma(t)$. Roughly speaking, he demonstrated that
under suitable assumptions on the  solution, the curve
evolves by its {\em binormal flow}, with a large velocity of order $|\log\ve|$. More precisely, if $\Gamma(t)$ is parametrized as  $x= \gamma(s,t)$ where $s$ designates its arclength parameter, then $\gamma(s,t)$ asymptotically obeys a law of the form
\be\label{bin}
\gamma_t =  2c|\log\ve|  (\gamma_s\times \gamma_{ss})
\ee
as $\ve \to 0$, or scaling $t= |\log\ve|^{-1}\tau $,
\be\label{bin1}
\gamma_\tau =  2c (\gamma_s\times \gamma_{ss})= 2c\kappa {\bf b}_{\Gamma(\tau)}  .
\ee
Here $c$ corresponds to the {\em circulation} of the velocity field on the boundary of sections to the filament, which is assumed to be a constant independent of $\ve$. For the curve  $\Gamma(\tau)$ parametrized as $x=\gamma(\tau,s)$,  we designate by ${\bf t}_{\Gamma(\tau)},\, {\bf n}_{\Gamma(\tau)},\, {\bf b}_{\Gamma(\tau)}$ the usual tangent, normal and binormal unit vectors, $\kappa$ its curvature.

\medskip
The law \equ{bin} was formally rediscovered several times during the 20th century, see the survey paper by Ricca \cite{ricca}.

\medskip
In  \cite{jerrard-seis}, Jerrad and Seis gave a precise form to Da Rios' computation under the weakest known conditions on a solution to \equ{euler} which remains suitably concentrated around an evolving curve. Their result considers a solution  $\vec \omega_\ve(x,t)$  under a set of conditions that imply that  as $\ve \to 0$
\be\label{conv}
\vec \omega _\ve (\cdot,|\log\ve|^{-1}\tau)\rightharpoonup  \delta_{\Gamma(\tau)}  {\bf t}_{\Gamma(\tau)},  \quad 0\le \tau \le T ,
\ee
where $\Gamma(\tau)$ is a sufficiently regular curve
and $\delta_{\Gamma(\tau)}$ denotes a uniform Dirac measure on the curve.
They prove that $\Gamma(\tau)$ does indeed evolve by the law \equ{bin1}.  See \cite{jerrard-smets} and its references for results on the flow \equ{bin1}.

\medskip
The existence of a true solution of \equ{euler} satisfying \equ{conv}  near a given curve $\Gamma(\tau)$
that evolves by the binormal flow \equ{bin1} is an outstanding open question called the {\em vortex filament conjecture}. This statement is unknown except for very special cases.  A basic example  is given by a circle $\Gamma(\tau)$ with radius $R$
translating with constant speed along its axis parametrized as
\begin{align}
\label{circle}
\gamma(s,\tau)  =   \left ( \begin {matrix}
  R \cos \big (\frac  {s}R\big ) \\
R \sin\big(\frac { s}R \big ) \\ \frac {2c}R   \tau \end{matrix} \right ), \quad \ 0<s \le  2\pi R.
\end{align}
We check that
$$ \gamma_s \times \gamma_{ss} = R^{-1}  \left ( \begin {matrix}
  0 \\
0 \\ 1  \end{matrix} \right ) ,
$$
and hence $\gamma$ satisfies equation \equ{bin1}.
Fraenkel's result \cite{fraenkel}
on  thin
vortex rings yields existence of a solution $\vec\omega_\ve(x,t)$ without change of form, such that the asymptotic behavior \equ{conv} holds for this curve. In other words, the statement of the {\em vortex filament conjecture} holds true
for the case of the traveling circle.
The vortex filament conjecture is also true in the case of a straight line. It suffices to consider a concentrated steady state constrained to a plane normal to the line and then trivially
extending it in the remaining variable.

\medskip
Another known solution of the binormal flow \equ{bin1} that does not change its form in time is the {\em rotating-translating helix}, the curve $\Gamma(\tau)$ parametrized as
\begin{equation}
\label{helix2}
\gamma (s,\tau) =
\left( \begin{matrix}
R \cos \big(
\frac{s-a_1 \tau}{\sqrt{h^2 + R^2}} \big)
\\
R \sin \big(
\frac{s-a_1 \tau}{\sqrt{h^2 + R^2}} \big)
\\
\frac{h s + b_1 \tau }{\sqrt{h^2+ R^2}}
\end{matrix} \right),
\quad a_1 =   \frac{2 c h}{ h^2 + R^2 },\quad  b_1 = \frac{2 c  R^2}{ h^2 + R^2 }.
\end{equation}
\hide{
\begin{equation}
\label{helix2}
\gamma (\tau,s) =
\left( \begin{matrix}
R e^{ i
\frac{s-b \tau}{\sqrt{h^2 + R^2}}}
\\
\frac{h s + a \tau }{\sqrt{h^2+ R^2}}
\end{matrix} \right),
\quad a = \frac{2 c  R^2}{ h^2 + R^2 } , \quad
b =   \frac{2 c h}{ h^2 + R^2 },
\end{equation}}
%where we identify $ \R^3 = \C \times \R$ and write $(x',z)\in  \R^3$, $x'\in \C$, $z\in \R$.
Here $R>0$, $h \ne  0$ are constants that are in correspondence  with the curvature and torsion of the helix, the numbers respectively given by
$ \frac{R}{h^2+R^2}$ and $ \frac{h}{h^2+R^2} $. We readily check that the parametrization \equ{helix2} satisfies equation \equ{bin1}.
This helix degenerates into the
 the traveling circle \equ{circle} when $h\to 0$ and to a straight line when $|h|\to +\infty$.

\medskip

The helices \eqref{helix2}
were observed to possibly describe vortex filaments in the sense
\equ{conv} more than 100 years ago by Joukowsky \cite{joukowsky}, Da Rios \cite{darios1916} and Levi-Civita \cite{levicivita1932}. Their no-change in form can be described by using the rotation matrices
\begin{align}
\label{Q}
P_\theta  = \left [   \begin{matrix}   \cos \theta & -\sin \theta \\  \sin \theta & \phantom{-}\cos \theta  \end{matrix}    \right ], \quad
Q_\theta   =  \left [   \begin{matrix}  P_\theta & 0 \\  0 & 1  \end{matrix}    \right ].
\end{align}
\hide{and the  affine transformation constituted by a rotation in the two first coordinates and a translation in the third one
\begin{align*}
S_{\theta, \sigma } ( x )   \, := \,   \left [    \begin{matrix}  P_\theta x' \\  x_3 + \sigma \end{matrix}    \right ]  =   Q_\theta x   + \left [   \begin{matrix}   0\\  \sigma  \end{matrix}    \right ], \quad x'=  \left [   \begin{matrix}   x_1\\ x_2 \end{matrix}    \right ].  % \quad  x' = \left [   \begin{matrix}   x_1 \\  x_2\end{matrix}    \right ].
\end{align*}}
Then $\gamma(s,\tau)$ has can be recovered from  $\gamma(s,0)$ by a rotation and a vertical translation by means of the sense that
$$\gamma (s,\tau)=
Q_{- a_2 \tau} \gamma(s,0)  + \left[ \begin{matrix}
0 \\  b_2 \tau
\end{matrix} \right],
%S_{- a_2 \tau, b_2 \tau } \gamma(s,0) ,
$$
\be\label{12}
a_2 =   \frac{2 c h}{ (h^2 + R^2)^{\frac 32}},\quad
b_2 = \frac{2 c  R^2}{ (h^2 + R^2)^{\frac 32} }.
\ee
Renewed interest in {\em helical filaments} has risen in the last two decades, see \cite{velasco} for a recent survey. However, there has been no proof of their existence.

\medskip
The purpose of this paper is {\bf to construct} a true helical filament $\vec \omega_\ve(x,t)$ satisfying \equ{conv}.
This solution  does not change of form and goes along the helix in the sense that the vector field \be\label{F}  F(x,\tau) = \vec \omega_\ve (x,  |\log\ve|^{-1}\tau ),  \quad \ee satisfies
\be \label{conds}\begin{aligned}
F(x,\tau)\  =&  \  Q_{a_2 \tau} F (   P_{- a_2 \tau}x', x_3 + b_2 \tau , 0), \\
F(x, \tau )\ =&\ F(x', x_3 + 2\pi h , \tau )
\end{aligned}
\ee
with $a_2,b_2$ given by \equ{12},  $x=(x',x_3),\ x'=(x_1,x_2)$.

\begin{theorem}\label{teo1} Let $\Gamma(\tau)$ be the helix parametrized by equation $\equ{bin1}$. Then
there exists a smooth solution $\vec \omega_\ve (x,t)$ to $\equ{euler}$, defined for $t\in (-\infty, \infty)$
that does not change form and follows the helix in the sense $\equ{F}$-$\equ{conds}$, such that for all $\tau$,
$$
\vec\omega_\ve (x, \tau|\log\ve|^{-1}) \rightharpoonup  c\delta_{\Gamma(\tau)}{\bf t}_{\Gamma(\tau)} \ass \ve \to 0.
$$

\end{theorem}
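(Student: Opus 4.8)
The plan is to exploit the fact that the rotating--translating helix is a \emph{self-similar} solution of the binormal flow --- it does not change shape, only rotates about the $x_3$-axis and slides along it --- so that the vorticity we look for is a genuine travelling--rotating wave, i.e. a \emph{stationary} solution of Euler in an appropriate moving frame; together with the helical symmetry of $\Gamma$ this collapses the problem to a $2$D semilinear elliptic equation in which the filament shrinks to a single point. Let $\xi(x)=(-x_2,x_1,h)$ generate the screw group $x=(x',x_3)\mapsto(P_\alpha x',x_3+h\alpha)$, whose orbits are the helices parallel to $\Gamma$ (so $\div\,\xi=0$), and seek
\[
\vec\omega_\varepsilon(x,t)=w\big(y(x),\,|\log\varepsilon|\,t\big)\,\xi(x),
\]
where $y=(r,\sigma)$, $r=|x'|$, $\sigma=\arg x'-x_3/h$, is a coordinate on the quotient $\R^3/\!\sim$ by the screw group. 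With this ansatz $w\,\xi$ is automatically divergence free and satisfies the $x_3$-periodicity in \eqref{conds}; a short computation shows that the self-similarity in \eqref{conds} is equivalent to $w(r,\sigma,\tau)=w_0(r,\sigma-\mu_0\tau)$ for an angular speed $\mu_0$ determined by $a_2,b_2$ in \eqref{12}, and that $\Gamma$ itself is the single point $(r,\sigma)=(R,0)$ of the quotient. Since $\mathbf u$ is then helically equivariant, $[\xi,\mathbf u]=0$ and the vortex-stretching term in \eqref{euler} drops out, so $w$ obeys a $2$D active-scalar equation $\partial_\tau w+\nabla^\perp\Psi\cdot\nabla w=0$, $\mathcal L\Psi=w$, where $\mathcal L$ is an elliptic operator with smooth bounded coefficients built from the helical metric (weight $\sim h^2/(h^2+r^2)$). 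Passing to the frame rotating with speed $\mu_0$, it suffices to produce a \emph{stationary} solution concentrated at $(R,0)$, equivalently a solution of the single semilinear elliptic equation $\mathcal L\Psi_\varepsilon=F(\Psi_\varepsilon-\mu_0\Phi)$ on the quotient, with $\Phi$ the stream function of the rigid rotation and $F$ a fixed profile of mass equal to the circulation $c$.

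For the construction I would take as first approximation the profile $F$ rescaled to width $\varepsilon$ and centred at $(R,0)$. After the rescaling $t=|\log\varepsilon|^{-1}\tau$, the logarithmically divergent self-induction of such a thin helical vortex produces an $O(1)$ velocity which at leading order must equal that of the moving frame at $(R,0)$: this balance is exactly the statement that $\Gamma$ obeys the binormal flow \eqref{bin1}, and it is what fixes $\mu_0$ (hence the values \eqref{12} of $a_2,b_2$), consistently because $R,h,c$ are prescribed. Writing the true solution as approximation plus a correction, I would solve for the correction by a Lyapunov--Schmidt scheme: the linearization of $\mathcal L\Psi=F(\Psi-\mu_0\Phi)$ about the rescaled bubble is invertible, uniformly as $\varepsilon\to0$, in suitable weighted norms, modulo the bubble's zero modes (one dilation, two translations). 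The dilation mode is removed by tuning the concentration parameter; the angular translation is pure gauge (the rotational symmetry); and the residual radial mode yields a solvability condition, the next-order form of the binormal relation, which holds for the given $R,h,c$. A contraction argument then produces $w_\varepsilon$. Undoing the reduction gives a smooth $\vec\omega_\varepsilon$ solving the full system \eqref{euler} for all $t\in\R$ and obeying \eqref{F}--\eqref{conds} (one must check that $w\,\xi$ genuinely solves the $3$D equations, not only the reduced one); the weak convergence $\vec\omega_\varepsilon(\cdot,|\log\varepsilon|^{-1}\tau)\rightharpoonup c\,\delta_{\Gamma(\tau)}\mathbf t_{\Gamma(\tau)}$ follows by pushing the concentrating $\varepsilon$-bump forward under the helical parametrization and tracking Jacobians, using $\xi/|\xi|=\mathbf t_\Gamma$. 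The multiple polygonal filaments are obtained by the same construction with $k$ rescaled bubbles at the vertices of a regular $k$-gon in the quotient and $\Z_k$ (plus reflection) symmetry imposed, so that the reduced system collapses to one scalar equation.

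The main obstacle is the uniform-in-$\varepsilon$ solvability of the linearized reduced problem. This requires the correct functional framework for the non-Laplacian operator $\mathcal L$ --- in particular its Green's function and the associated Kirchhoff--Routh/Robin function on the helical quotient, which governs both the positions a concentrated vortex may occupy and the $O(1)$ correction to its speed --- together with careful control of the interaction between the slowly varying helical metric and the fast $\varepsilon$-scale of the bubble, and elimination of the zero modes so that the finite-dimensional reduced equation is solvable \emph{precisely} for the binormal helix. A secondary point is to verify that the no-swirl helical reduction is consistent and that the recovered $3$D field really solves \eqref{euler}.
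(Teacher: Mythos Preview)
Your outline is correct and follows essentially the same route as the paper: helical reduction to a 2D active-scalar equation, passage to a rotating frame to obtain a single semilinear elliptic equation (the paper takes $F(s)=\varepsilon^2 e^s$, so the bubble is the Liouville profile), and a Lyapunov--Schmidt / inner--outer gluing around the rescaled bubble.

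Two corrections on the handling of the kernel are worth flagging, since they are where the actual work lies. First, the dilation mode $Z_0$ is \emph{not} eliminated by tuning a concentration parameter: in the paper's scaling the bubble width is tied to $\varepsilon$ and there is no free scale; instead one carries the $c_0 e^{\Gamma_0}Z_0$ term and absorbs it into an auxiliary inner piece $\phi_2$ allowed to grow like $\log|y|$, which is then controlled in the outer problem. Second, the radial solvability condition ($c_1=0$) does \emph{not} hold automatically for the given $R,h,c$ once $\mu_0$ is fixed at leading order; it is satisfied by adjusting the rotation speed by a lower-order correction, $\alpha=\tfrac{2}{1+x_0^2}+\alpha_1$ with $|\alpha_1|\le|\log\varepsilon|^{-1/2}$, and $\alpha_1$ becomes one of the unknowns in the final fixed-point system $(\psi,\phi_1,\phi_2,\alpha_1)$. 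The tangential mode is indeed killed by even symmetry in $x_2$, as you say.
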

This result extends to the situation of several helices symmetrically arranged.  Let us consider the curve $\Gamma(\tau)$
parametrized by $\gamma(\tau ,s)$ in \equ{helix2}. Let us define for $j=1,\ldots, k $  the curves $\Gamma_j(\tau)$ parametrized by
\be \gamma_j(s,\tau) =  Q_{ 2\pi \frac{j-1} k } \gamma(s,\tau ) . \label{helixj}
\ee
The following result extends that of Theorem \ref{teo1}.

\begin{theorem}\label{teo2} Let $\Gamma_j(\tau)$ be the helices parametrized by equation $\equ{helixj}$, for $j=1,\ldots,k$. Then
there exists a smooth solution $\vec \omega_\ve (x,t)$ to $\equ{euler}$, defined for $t\in (-\infty, \infty)$
that does not change form and follows the helix in the sense $\equ{F}$-$\equ{conds}$, such that for all $\tau$,
$$
\vec\omega_\ve (x, \tau|\log\ve|^{-1}) \rightharpoonup  c \sum_{j=1}^k \delta_{\Gamma_j(\tau)}{\bf t}_{\Gamma_j(\tau)} \ass \ve \to 0.
$$

\end{theorem}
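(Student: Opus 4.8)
The plan is to reproduce the construction of Theorem \ref{teo1}, now carried out inside the class of solutions possessing the extra discrete rotational symmetry relating the $k$ helices, so that the finite-dimensional part of the reduction collapses to a single copy. Assume $k\ge 2$, the case $k=1$ being Theorem \ref{teo1}.

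First I would pass to the rotating-translating frame \equ{F}--\equ{conds} with speeds $a_2,b_2$ of \equ{12}: a solution of \equ{euler} that does not change form and follows the helices $\Gamma_j(\tau)$ is the same object as a steady solution of the Euler system written in that frame. All the $\Gamma_j$ share the common screw symmetry of coaxial helices of vertical period $2\pi h$, and the solution is required to be invariant under that one-parameter group and under the vertical period; quotienting the screw symmetry out reduces the problem, exactly as for Theorem \ref{teo1}, to a semilinear elliptic problem on a two-dimensional domain $\mathcal D$ for a scalar stream potential, schematically
\be
-\div\big(A(y)\,\nn\psi\big)\ =\ \rho(y)\,f(\psi)\inn\mathcal D ,
\ee
with a smooth positive-definite matrix $A$ and a positive weight $\rho$, both produced by the helical metric together with the rotation-translation terms of speeds $a_2,b_2$. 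In these coordinates the $j$-th helix is represented by a point $p_j\in\mathcal D$, and the configuration \equ{helixj} forces the $p_j$ to be the images of a single $p=p_1$ under the rotation by $2\pi/k$ about the centre of $\mathcal D$, which is a symmetry of $A$, $\rho$ and of the nonlinearity.

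Next I would take as approximate solution a superposition $\psi_{\mathrm{app}}=\sum_{j=1}^{k}W_{\ve,p_j}+(\text{lower order correction})$, with $W_{\ve,p}$ the $\ve$-scaled model vortex profile centred at $p$ already used for Theorem \ref{teo1} (the one dictated by the Da Rios scaling $t=|\log\ve|^{-1}\tau$), and look for the true solution within the subspace of $\Z_k$-invariant functions, which is left invariant by the equation. Writing $\psi=\psi_{\mathrm{app}}+\phi$, I would run the same Lyapunov--Schmidt / gluing scheme as in Theorem \ref{teo1} in the same weighted norms: solve for $\phi=\phi(p)$ the equation projected onto the orthogonal complement of the approximate kernel by a contraction argument, and then solve the remaining scalar reduced equation $c_\ve(p)=0$. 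The only genuinely new ingredient with respect to Theorem \ref{teo1} is the mutual interaction of the $k$ bumps; but since the $p_j$ stay at mutual distance bounded away from zero while each bump has core size $\sim\ve$, this interaction is of strictly lower order than the self-induction that drives the balance — in the rescaled equation it enters at relative order $O(|\log\ve|^{-1})$ — and is absorbed as a perturbation, both in the fixed-point estimates for $\phi$ and in the reduced equation. By the imposed symmetry, the reduced equation is at main order exactly the balance relation of Theorem \ref{teo1} that pins down $p$ so that the curvature-induced drift of the filament matches $a_2,b_2$; solving it and undoing the reductions produces the desired $\vec\omega_\ve$, defined for all $t\in(-\infty,\infty)$, of the form \equ{F}--\equ{conds}, with $\vec\omega_\ve(\cdot,\tau|\log\ve|^{-1})\rightharpoonup c\sum_{j=1}^{k}\delta_{\Gamma_j(\tau)}{\bf t}_{\Gamma_j(\tau)}$ as $\ve\to 0$.

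The step I expect to be the real obstacle — and essentially the only point that is not a mechanical transcription of Theorem \ref{teo1} — is the uniform invertibility, as $\ve\to 0$, of the linearised operator restricted to the $\Z_k$-invariant class, after removing the single approximate kernel direction generated by a rigid rotation of the whole $k$-bump cluster. Concretely, one has to show that the translational degeneracies of the model linearisation around each individual bump, which are excluded by the $\Z_k$-symmetry, are not recreated through the interaction of the $k$ bumps, so that the symmetry reduction really does bring the effective kernel down to the one direction compatible with it; only then do the weighted a priori estimates of Theorem \ref{teo1} survive the addition of the extra $k-1$ bumps. Granting this non-degeneracy, the remainder of the argument is a word-for-word repetition, with only bookkeeping changes, of the proof of Theorem \ref{teo1}.
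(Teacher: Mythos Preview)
Your proposal is correct and follows essentially the same route as the paper: restrict to the $\Z_k$- and reflection-invariant subspace, take the approximate solution to be the sum of $k$ rotated copies of the single-bump ansatz of Theorem \ref{teo1}, and repeat the inner-outer gluing verbatim. The linearisation concern you flag is not a genuine obstacle in the paper's scheme, since the inner linear problem (Proposition \ref{prop1}) is local to one bump and copied to the others by symmetry while the outer linear problem (Proposition \ref{prop2}) is global and unchanged; the bump-bump interaction enters only as a lower-order perturbation absorbed in the contraction step, exactly as you anticipate.
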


Our construction takes advantage of a screw-driving symmetry invariance enjoyed by the Euler equations observed by Dutrifoy \cite{dutrifoy} and  Ettinger-Titi \cite{ettinger-titi}, see also the more recent results in \cite{bnl,jln}.
From the analysis in the latter reference, it follows that a function of the form
\begin{align*}
\vec\omega(x,t)  =  \frac{1}{h} w(  P_{-\frac {x_3}h} x', t)
\left (\begin{matrix}   P_{\frac \pi 2} x' \\ h    \end{matrix}  \right) , \qquad  x'=  (x_1,x_2) ,
\end{align*}
%where $w(x',t)$ is a scalar function, ,
solves \equ{euler} if the scalar function $w(x',t)$ satisfies the transport equation
\begin{equation}\label{PB-00}
\begin{aligned}
\left\{
\begin{aligned}
w_t + \nabla^\perp \psi \cdot \nabla w &=0 && {\mbox {in}} \quad \R^2 \times (0, T)
\\
- {\mbox {div}} (K \nabla \psi) &= w && {\mbox {in}} \quad \R^2 \times (0, T),
\end{aligned}
\right.
\end{aligned}
\end{equation}
where $(a,b)^\perp = (b,-a)$ and
$K (x_1,x_2) $ is the matrix
\begin{equation}
\label{defK-0}
K(x_1 , x_2 ) = \frac{1}{h^2+x_1^2+x_2^2}
\left(
\begin{matrix}
h^2+x_2^2 & -x_1  x_2\\
-x_1 x_2 & h^2+x_1^2
\end{matrix}
\right).
\end{equation}
For completeness we prove this fact in    Section~\ref{sectHelical}.

\medskip

The proof of Theorems~\ref{teo1} and \ref{teo2} is reduced to finding solutions  of \eqref{PB-00} concentrated at the vertices of a rotating $k$-regular polygon, which do not change form.
We devote the rest of the paper to build such a solution by means of elliptic singular perturbation techniques.

\medskip

Solutions concentrated near helices in other PDE settings have
have been built in \cite{contreras-jerrard,chiron,ddmr,jerrard-smets-gp,wei-yang}.
Construction of vortex filaments with small vorticities around general sets has been achieved in \cite{enciso-peralta}. Connected with Theorem \ref{teo2},
the formal law for the dynamics of nearly parallel interacting filaments in the Euler equation has been found in \cite{klein-majda-damodaran}, which is the same law governing almost parallel vortex filaments for the
Gross-Pitaevskii
equation, see \cite{jerrard-smets-gp}.
The filaments in Theorem~\ref{teo2} are not nearly parallel.
The motion of vortex filaments is the natural generalization of the motion of point vortices for the 2D incompressible Euler equations. Their desingularization has been rigorously analyzed in \cite{marchioro-pulvirenti,serfati1,serfati2,Z} and \cite{ddmw}.  Nonlinear stability of point vortices has been recently established in
\cite{ionescu-jia}.

\section{Solutions with helical symmetry}
\label{sectHelical}

In this section we recall how to find solutions of the incompressible Euler equations in 3d with helical symmetry, following Dutrifoy \cite{dutrifoy} and  Ettinger-Titi \cite{ettinger-titi}.

Let $h>0$.
%We will use the notation $x\in \R^3$, $ x = (x_1,x_2,x_3) = (x',x_3)$ with $x'=(x_1,x_2)\in \R^2$ and $x_3 \in \R$.
We say that a scalar function $f:\R^3\to \R$ has helical symmetry if
\begin{align}
\label{sym1}
f( P_\theta x' , x_3+ h\theta) = f(x',x_3)  \foral \theta\in \R , \ \  x = (x',x_3) \in \R^3 ,
\end{align}
%{\color{red} This is incompatible with the helices \eqref{helix2}.}
where $P_\theta$ is defined in \eqref{Q}.
For a scalar function satisfying \eqref{sym1} we have $f(x) = f( P_{- \frac{x_3}{h}}x',0)$, and so $f$ is determined by its values on the horizontal plane $$\{ \, x = (x',x_3) \ |  \ x_3 = 0 \, \}. $$
Also, if $f$ is $C^1$, then $f$ satisfies \eqref{sym1} if and only if $\nabla f \cdot \vec \xi = 0$ where $\vec \xi $ is the vector field
\[
\vec\xi(x)= (- x_2,x_1,h) , \qquad x =(x_1,x_2,x_3).
\]
A vector field $F:\R^3\to \R^3$ is said to have helical symmetry if
\begin{align}
\label{sym2}
F( P_{\theta} x' , x_3+ h\theta) = Q_\theta F(x',x_3), \quad \forall \theta\in \R , \quad \forall x = (x',x_3) \in \R^3 ,
\end{align}
where $Q_\theta$ is the matrix defined in \eqref{Q}.
If $F$ satisfies \eqref{sym2}	then
\[
F ( x) = R_{\frac{x_3}{h}} F( P_{-  \frac{x_3}{h}}x',0) ,
\]
so that $F$ is determined by its values on the plane $\{ \, x = (x',x_3) \ |  \ x_3 = 0 \, \}$.
Again, if $F = (F_1,F_2,F_3)$ is $C^1$, then it satisfies \eqref{sym2} if and only if
\begin{align}
\label{helicalV}
\nabla F_1 \cdot \vec \xi = - F_2, \quad \nabla F_2 \cdot \vec \xi = F_1, \quad
\nabla F_3 \cdot \vec \xi = 0 .
\end{align}

The following result is a consequence from the analysis in \cite{ettinger-titi}.
\begin{lemma}
Suppose that  $w(x',t)$ satisfies the transport equation
\eqref{PB-00}.
Then there exists a vector field ${\bf u}$ with helical symmetry \eqref{sym2} such that
\begin{align}
\label{omegaHelical}
\vec \omega(x,t)  =  \frac{1}{h} w( P_{-\frac {x_3}h} x',t)  \vec\xi(x) , \quad x = (x',x_3)
\end{align}
satisfies the Euler equation \eqref{euler}.
\end{lemma}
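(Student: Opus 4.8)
\emph{Proof sketch.} Given $(w,\psi)$ solving \eqref{PB-00}, set $W(x,t):=w(P_{-x_3/h}x',t)$; this scalar has helical symmetry \eqref{sym1}, so $\nabla W\cdot\vec\xi=0$, and since $\div\vec\xi=0$ we get $\div\vec\omega=\tfrac1h\div(W\vec\xi)=0$, which is the compatibility condition needed to solve $\curl{\bf u}=\vec\omega$ with $\div{\bf u}=0$. The key algebraic remark is that if ${\bf u}=(u_1,u_2,u_3)$ is any $C^1$ divergence-free field with helical symmetry \eqref{sym2}, then \eqref{helicalV} gives $(\vec\xi\cdot\nabla){\bf u}=(-u_2,u_1,0)$, while directly $({\bf u}\cdot\nabla)\vec\xi=(-u_2,u_1,0)$; hence $\vec\xi$ and ${\bf u}$ commute as vector fields.

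Using $\vec\omega=\tfrac1h W\vec\xi$, the time-independence of $\vec\xi$, and this commutation,
\[
\begin{aligned}
\vec\omega_t+({\bf u}\cdot\nabla)\vec\omega-(\vec\omega\cdot\nabla){\bf u}
&=\tfrac1h W_t\vec\xi+\tfrac1h\big(({\bf u}\cdot\nabla)W\big)\vec\xi+\tfrac1h W({\bf u}\cdot\nabla)\vec\xi-\tfrac1h W(\vec\xi\cdot\nabla){\bf u}\\
&=\tfrac1h\big(W_t+{\bf u}\cdot\nabla W\big)\vec\xi .
\end{aligned}
\]
Since $\vec\xi$ never vanishes, for a divergence-free helically symmetric ${\bf u}$ with $\curl{\bf u}=\vec\omega$ the vorticity form of Euler \eqref{euler} is therefore \emph{equivalent} to the scalar transport equation $W_t+{\bf u}\cdot\nabla W=0$ on $\R^3$ (the relation between $\vec\omega$ and $\vec\psi$ in \eqref{euler} being just a restatement of $\curl{\bf u}=\vec\omega$, $\div{\bf u}=0$ up to gauge).

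It remains to build ${\bf u}$ from $\psi$ and to verify that $W_t+{\bf u}\cdot\nabla W=0$ descends to \eqref{PB-00}. A helically symmetric field is determined by its trace on $\{x_3=0\}$; since $\vec\omega$ points along $\vec\xi$ — the swirl-free situation, as is transparent when $h=0$ and $\vec\xi$ is azimuthal — I would seek ${\bf u}$ with ${\bf u}\cdot\vec\xi$ constant, so that it is encoded by a single reduced stream function. Substituting the helical relations \eqref{helicalV} to trade $\partial_{x_3}$-derivatives for planar ones, the two conditions $\div{\bf u}=0$ and $\curl{\bf u}=\tfrac1h W\vec\xi$ become, on $\{x_3=0\}$, precisely the assertion that this stream function equals $\psi$ and that $-\div(K\nabla\psi)=w$ with $K$ as in \eqref{defK-0}; in particular the planar components of ${\bf u}|_{x_3=0}$ are an explicit first-order expression in $\psi$. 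Feeding this into ${\bf u}\cdot\nabla W$, and once more using $\nabla W\cdot\vec\xi=0$ to eliminate $\partial_{x_3}W$ in favour of $\nabla_{x'}w$, turns ${\bf u}\cdot\nabla W$ on $\{x_3=0\}$ into $\nabla^\perp\psi\cdot\nabla w$. Thus $W_t+{\bf u}\cdot\nabla W$ is a helically symmetric scalar that agrees on $\{x_3=0\}$ with $w_t+\nabla^\perp\psi\cdot\nabla w=0$, hence vanishes identically, and ${\bf u}$ satisfies \eqref{sym2} by construction. The main obstacle is the bookkeeping of this last paragraph: pinning down the correct helically symmetric ${\bf u}$ attached to $\psi$, matching the degenerate matrix $K$ with the reduction of $\curl{\bf u}=\vec\omega$, and carefully tracking the rotation $P_{-x_3/h}$ when passing between $\R^3$ and planar derivatives. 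One should also note that the $\R^3$ Biot--Savart law is not directly usable — $\vec\omega=\tfrac1h W\vec\xi$ does not decay as $|x_3|\to\infty$ — so ${\bf u}$ must be produced through the reduced two-dimensional elliptic problem for $\psi$, not by inverting $-\Delta$ on $\R^3$.
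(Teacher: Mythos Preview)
Your sketch is correct and follows essentially the same route as the paper: construct a helically symmetric, ``swirl-free'' velocity ${\bf u}$ (the paper takes ${\bf u}\cdot\vec\xi=0$, your ``constant'' should be $0$) from the planar stream function $\psi$, and then reduce the vorticity equation to the scalar transport $W_t+{\bf u}\cdot\nabla W=0$, which on $\{x_3=0\}$ becomes $w_t+\nabla^\perp\psi\cdot\nabla w=0$.

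The one organizational difference worth noting is your commutator identity $(\vec\xi\cdot\nabla){\bf u}=({\bf u}\cdot\nabla)\vec\xi=(-u_2,u_1,0)$, which packages the reduction of the three-component vorticity equation to a scalar equation in one line. The paper does the equivalent thing by recording only the stretching term $(\vec\omega\cdot\nabla){\bf u}=\tfrac1h w(-u_2,u_1,0)$ and then checking the Euler equation component by component on $\{x_3=0\}$; your Lie-bracket phrasing is cleaner and makes clear \emph{why} the reduction works. The ``bookkeeping'' you defer is exactly what the paper writes out: the explicit relation between $(u_1,u_2)|_{x_3=0}$ and $\nabla\psi$ (their formulas \eqref{uFromPsi}--\eqref{psiFromU}), the verification that the third component of $\curl{\bf u}$ equals $-\div(K\nabla\psi)=w$, and the algebra showing that ${\bf u}\cdot\nabla W$ collapses to $\nabla^\perp\psi\cdot\nabla w$ once one uses both $\nabla W\cdot\vec\xi=0$ and ${\bf u}\cdot\vec\xi=0$. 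Your remark that one cannot invoke $\R^3$ Biot--Savart and must instead build ${\bf u}$ through the reduced elliptic problem is also the paper's (implicit) point of view.
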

\begin{proof}
We note that the vorticity $\vec\omega$ defined by \eqref{omegaHelical} has the helical symmetry \eqref{sym2}.
Moreover the scalar function $w$ is satisfies \eqref{sym1} and hence $\nabla w \cdot \vec \xi=0$.

The velocity vector field ${\bf u}$ associated to $\omega$ will be constructed such that it has the helical symmetry  \eqref{sym2} and satisfies in addition
\begin{align}
\label{uPerpXi}
{\bf u}\cdot \vec \xi  = 0 .
\end{align}
This condition and  \eqref{omegaHelical} say that $\vec\omega$ and ${\bf u}$ are always orthogonal.

%Next we construct a velocity field ${\bf u} = ( u_1,u_2,u_3)$ so that \eqref{euler} holds and such that it satisfies the desired symmetries.
To construct ${\bf u} = ( u_1,u_2,u_3)$, we first define $u_1(x',0)$, and $u_2(x',0)$ by the following relation
\begin{align}
\label{uFromPsi}
\left(
\begin{matrix}
u_1\\u_2
\end{matrix}
\right)
=  \frac{1}{h^2+x_1^2+x_2^2}
\left(
\begin{matrix}
- x_1  x_2 &   h^2 +  x_1^2 \\
- h^2 - x_2^2 & x_1 x_2
\end{matrix}
\right)
\left(
\begin{matrix}
\partial_{x_1} \psi \\ \partial_{x_2} \psi
\end{matrix}
\right) .
\end{align}
Next we define $u_3(x',0)$ using \eqref{uPerpXi}:
\[
u_3 = \frac{1}{h}( x_2 u_1  - x_1 u_2 ) .
\]
In this way ${\bf u}(x',0)$ is defined for all $x' \in \R^2$ and then we extend ${\bf u}$ to $\R^3$ imposing that  \eqref{sym2} is satisfied.

Let us explain formula \eqref{uFromPsi}. The incompressibility condition $\nabla \cdot {\bf u} = 0$ is rewritten as
\begin{align*}
0 & = \partial_{x_1} u_1 + \partial_{x_2} u_2 + \partial_{x_3} u_3
= \partial_{x_1} u_1 + \partial_{x_2} u_2  + \frac{y}{h} \partial_{x_1} u_3-  \frac{x}{h} \partial_{x_2} u_3 ,
\end{align*}
where we have used that $- x_2 u_1 + x_1 u_2 + h u_3 = 0$ (third formula in \eqref{helicalV}).
Then from \eqref{uPerpXi} we get
\begin{align*}
0 &=
\partial_{x_1} u_1 + \partial_{x_2} u_2  + \frac{y}{h^2} \partial_{x_1} ( x_2 u_1 - x_1 u_2) -  \frac{x}{h^2 } \partial_{x_2}  ( x_2 u_1 - x_1 u_2)
\\
&=
\frac{1}{h^2} \partial_{x_1}[ ( h^2 + x_2^2) u_1 - x_1 x_2 u_2 ]
+ \frac{1}{h^2} \partial_{x_2} [ ( -x_1 x_2 u_1 + (h^2 + x_1^2) u_2 ] .
\end{align*}
This motivates to take $\psi:\R^2 \to \R$ such that
\begin{align}
\label{psiFromU}
\begin{aligned}
\psi_{x_1} &= \phantom{-}\frac{1}{h^2} [ - x_1 x_2 u_1 + (h^2 + x_1^2) u_2 ]
\\
\psi_{x_2} &= - \frac{1}{h^2} [  (h^2 + x_2^2)  u_1 -  x_1 x_2 u_2 ]  .
\end{aligned}
\end{align}
This is equivalent to \eqref{uFromPsi}. Later on we verify that $\psi$ satisfies $-\div ( K \nabla \psi) = w$.

We claim that $\omega $, ${\bf u}$ satisfy the Euler equations \eqref{euler}. We check the first equation component by component and on the plane $x_3=0$. The equality for all $x_3$ follows from the helical symmetry.
First we note that
\begin{align}
\label{vortStretch}
( \vec \omega \cdot \nabla  ) {\bf u}  = \frac{1}{h} w
\left( \begin{matrix}
-u_2 \\ \phantom{-}u_1 \\  \phantom{-}0
\end{matrix}\right)
\end{align}
by the form of $\omega$ in \eqref{omegaHelical} and the equations \eqref{helicalV} satisfied by ${\bf u}$.
Using \eqref{psiFromU} and \eqref{PB-00}  we get
\[
w_t - \frac{1}{h^2}[ - (h^2+x_2^2) u_1 + x_1 x_2 u_2] w_{x_1}
+ \frac{1}{h^2}[ -x_1x_2u_1 + (h^2+x_1^2) u_2 ] w_{x_2} = 0 ,
\]
which gives, using that $\nabla w \cdot \vec \xi = 0$ and ${\bf u} \cdot \xi = 0$, that
\[
w_t + u_1 w_{x_1} + u_2 w_{x_2} + u_3 w_{x_3} = 0.
\]
By \eqref{vortStretch} this is  the third component in the first equation in \eqref{euler}.
The first and second components are handled similarly.

Next we verify that $\vec \omega = \curl {\bf u}$.
Indeed, we look first at the third component of $\curl {\bf u}$, which is
\begin{align*}
\partial_{x_1} u_2 - \partial_{x_2} u_1
&= \partial_{x_1} \Big[ \frac{1}{h^2+x_1^2+x_2^2}
( - ( h^2  + x_2^2)  \psi_{x_1} +  x_1 x_2 \psi_{x_2})
\Big]
\\
& \quad
- \partial_{x_2} \Big[ \frac{1}{h^2+x_1^2+x_2^2}
( - x_1 x_2 \psi_{x_1} + ( h^2  + x_1^2)  \psi_{x_2})
\Big]
\\
& = - \div ( K \nabla \psi )
\\ &= w.
\end{align*}
The first and second component of  $\curl  {\bf u}$ are computed similarly.

\end{proof}

\section{Equation \texorpdfstring{\eqref{PB-00}}{PB-00} in a rotational frame}
\label{sectRot}

To find a solution  $\vec \omega_\ve (x,t)$ as described in Theorem~\ref{teo1},
of the form
\begin{align*}
\vec\omega_\varepsilon(x,t)  =  \frac{1}{h} w(  P_{-\frac {x_3}h} x', t)
\left (\begin{matrix}   P_{\frac \pi 2} x' \\ h    \end{matrix}  \right) , \qquad  x'=  (x_1,x_2) ,
\end{align*}
we need to find a solution $w(x',t) $ of \eqref{PB-00} such that, with the change of variable $t = |\log\varepsilon|^{-1}\tau$, it is concentrated near a single point in the plane that is rotating with constant speed around the origin.
Here $P$ is the matrix defined in \eqref{Q}.
In terms of $\tau$, equation \eqref{PB-00} becomes
\begin{equation}\label{PB}
\begin{aligned}
\left\{
\begin{aligned}
|\log \ve | w_\tau + \nabla^\perp \psi \cdot \nabla w &=0 &&  {\mbox {in }}  \R^2 \times (-\infty, \infty)
\\
- \div ( K \nabla \psi)  &= w &&  {\mbox {in }}  \R^2 \times (-\infty, \infty),
\end{aligned}
\right.
\end{aligned}
\end{equation}
where
%\begin{align}
%\label{defL}
%L\psi = \div ( K \nabla \psi),
%\end{align}
%and
$K $ was defined in \eqref{defK-0}.
%\begin{equation*}
%\label{defK-0}
%K(x_1 , x_2 ) = \frac{1}{h^2+x_1^2+x_2^2}
%\left(
%\begin{matrix}
%h^2+x_2^2 & -x_1  x_2\\
%-x_1 x_2 & h^2+x_1^2
%\end{matrix}
%\right).
%\end{equation*}

%In Lemma 2.17 \cite{et} it is proved that, if $w$, $\psi$ solve
%system \eqref{PB}, then there is a solution to 3D Euler with helical symmetry.
%The parameter $h$ can be scaled out. Indeed, if $(w_h, \psi_h)$ solve \eqref{PB}, then
%$$
%w (x_1 , x_2 ,\tau) = h^2 w_h (|h| x_1 , |h| x_2,\tau) , \quad \psi (x_1 , x_2) = \psi_h (|h| x_1 , |h| x_2 )
%$$
%solve \eqref{PB} with $h^2= 1$ in the definition of the matrix $K$.

For notational simplicity, in what follows we will restrict ourselves to the case
$$
h^2 =1.
$$

\medskip
Let $\alpha$ be a fixed constant. We look for rotating solutions to problem \eqref{PB} of the form
\begin{equation}
\label{ansaztRot}
w (x', \tau) = W \left( P_{ \alpha \tau } x' \right), \quad \psi (x',\tau) = \Psi \left( P_{ \alpha \tau } x' \right) ,
\end{equation}
where $P_\theta$ is defined in \eqref{Q}.

Let $\tilde x = P_{\alpha \tau} x'$.
For the rest of this computation we will denote differential operators with respect to $\tilde x$ with a subscript $\tilde x$, and differential operators with respect to $x'$ without any subscript.
In terms of $(W, \Psi)$, the second equation in \eqref{PB} becomes
$$
-\div_{\tilde x} ( K(\tilde x) \nabla_{\tilde x} \psi)  \Psi = W.
$$
Let us see how the first equation gets transformed.
%Define the $2\times2$ matrices
%$$
%%P= \left[ \begin{matrix} e^{i \alpha t} & i e^{i \alpha t} \end{matrix} \right] , \quad
%J= \left[ \begin{matrix} 0 & 1 \\ -1 & 0\end{matrix} \right]
%$$
Observe that
$$
w_\tau = \nabla_{\tilde x} W  \cdot (\alpha P_{\frac\pi2+\alpha \tau } x' )
%= \nabla_z W \cdot (i \alpha x')
= - \alpha \nabla_{\tilde x} W \cdot \tilde x^\perp,
$$
and
$$
(\nabla w)^T = (\nabla_{\tilde x} W)^T P_{\alpha \tau}, \quad
(\nabla \psi)^T = (\nabla_{\tilde x} \Psi )^T P_{\alpha \tau}.
$$
Let $J = P_{\frac{\pi}{2}}$.
Since $\nabla \psi^\perp \cdot \nabla w=
(\nabla w )^T J \nabla \psi$,
and $ P_{\alpha\tau} J P_{\alpha\tau}^T = J $,
we have
\begin{align*}
\nabla \psi^\perp \cdot \nabla w &= (\nabla w )^T J \nabla \psi = (\nabla_{\tilde x} W)^T P_{\alpha\tau} J P_{\alpha\tau}^T \nabla_{\tilde x} \Psi \\
&= (\nabla_{\tilde x} W )^T J \nabla_{\tilde x} \Psi = (\nabla_{\tilde x} \Psi)^\perp \cdot (\nabla_{\tilde x} W ) .
\end{align*}
We conclude that equation $|\log \ve |w_\tau + \nabla^\perp \psi \cdot \nabla w =0$ becomes
$$
-\alpha |\log \ve |  \nabla_{\tilde x} W(\tilde x) \cdot \tilde x^\perp +  \nabla_{\tilde x}^\perp \Psi \cdot \nabla_{\tilde x} W= 0,
$$
or equivalently
\begin{align}
\label{eq00}
\nabla_{\tilde x} W \cdot \nabla_{\tilde x}^\perp \left( \Psi - \alpha |\log \ve | \frac{|\tilde x|^2 }{ 2} \right) = 0 .
\end{align}
Now, if $W(\tilde x) = F(\Psi(\tilde x) - \alpha  |\log \ve |
\frac{ |\tilde x|^2}{2} )$, for some function $F$, then automatically \eqref{eq00} holds.
We conclude that if $\Psi$ is a solution to
\begin{equation}\label{PB10}
- \div_{\tilde x} \cdot (K(\tilde x) \nabla_{\tilde x} \Psi ) = F (\Psi - \frac{\alpha}{2} |\log \ve | |\tilde x|^2 ) \quad {\mbox {in}} \quad \R^2,
\end{equation}
for some function $F$, and $W$ is given by
$$
W(\tilde x)= F(\Psi(\tilde x) - \alpha  |\log \ve | \frac{|\tilde x|^2}{2} )
$$
then $ (w, \psi )$ defined by \eqref{ansaztRot} is a solution for \eqref{PB}.

In \eqref{PB10} we take
$$
F (s) = \ve^2 f(s) , \quad f(s) = e^s,
$$
where $\varepsilon>0$.
In the sequel we write $\tilde x $ as $x = (x_1,x_2) \in \R^2$ and consider the equation
\[
-\div (K \nabla \Psi ) = \ve^2 e^{(\Psi -{\alpha \over 2} |\log \ve | |x|^2 )} \quad {\mbox {in}} \quad \R^2,
\]
where $\varepsilon>0$.

\section{Construction of an approximate solution}
\label{approx}

The rest of the paper is devoted to find a  solution to
\begin{equation}\label{PB1}
-\nabla \cdot (K \nabla \Psi ) = \ve^2 e^{(\Psi -{\alpha \over 2} |\log \ve | |x|^2 )} \quad {\mbox {in}} \quad \R^2,
\end{equation}
where $\varepsilon>0$ is small, and such that the solution is concentrated near a fixed point $q_0 = (x_0 , 0)$ with $x_0>0$. The parameter $x_0$ is fixed and corresponds to $R$ in the definition of the helices \eqref{helix2}. The number $\alpha$ corresponds to the angular velocity of the rotating solution described in Section~\ref{sectRot}, and will be adjusted suitably in the course of the proof.

We start with the construction of a global approximate solution to
\eqref{PB1}.
Towards this end,  consider the change of variables
\begin{equation*}
%\label{change}
x_1 = x_0 + z_1 , \quad x_2 = \sqrt{1+ x_0^2} z_2.
\end{equation*}

Let
\begin{equation}\label{Gammaep}
\Gamma_\ve (z) = \log {8 \over (\ve^2 + |z|^2 )^2}
\end{equation}
be the Liouville profile which satisfies
\begin{equation}\label{uno}
\Delta \Gamma_\ve  + {8 \ve^2 \over (\ve^2 + |z|^2)^2 }= 0 \quad {\mbox {in }} \quad \R^2.
\end{equation}

For a fixed $\delta >0$, we define
\begin{equation}\label{psi1}
\Psi_{1\ve} (x ) = {\alpha \over 2} |\log \ve | x_0^2 - \log (1+ x_0^2) + \Gamma_\ve (z)  \left( 1+ c_1 z_1  \right)  \,
\end{equation}
in the region  $|z| < \delta$.
Here $c_1$  is a constant to be determined later on.

For a function $\Psi$, we define the error-function as
\begin{equation}\label{defS}
S [\Psi ] (z) =   L \Psi + \ve^2 \, f (\Psi -{\alpha \over 2} |\log \ve | ( |x_0+ z_1|^2 + (1+ x_0^2) |z_2|^2) ) ,
\end{equation}
where
\begin{align}
\label{defL}
L \Psi = \div ( K \nabla \Psi).
\end{align}

We would like to describe  explicitly the error function $S[\Psi]$, for $\Psi = \Psi_{1\ve}$, in the region $|z| < \delta$.
Using the notation $K = \left( \begin{matrix} K_{11} & K_{12} \\ K_{12} & K_{22} \end{matrix} \right)$, we compute in the original variables
$$
\begin{aligned}
L= \nabla \cdot (K \nabla \cdot) &= K_{11} \, \pp_{x_1}^2 + K_{22} \, \pp_{x_2}^2 + 2 K_{12} \,  \pp_{x_1 x_2}^2\\
&+ ( \pp_{x_1} K_{11}   + \pp_{x_2} K_{12} ) \,  \pp_{x_1} + ( \pp_{x_2} K_{22} + \pp_{x_1} K_{12}) \, \pp_{x_2}
\end{aligned}
$$
Letting $r^2 = |x|^2$, we  get that
\begin{equation}\label{exL}
\begin{aligned}
L& = {1+ x_2^2 \over 1+ r^2} \pp_{x_1 x_1} + {1+ x_1^2 \over 1+ r^2} \pp_{x_2 x_2} - 2{ x_1 x_2 \over 1+r^2} \pp_{x_1 x_2}\\
 &+ \left( \pp_{x_1} ({1+x_2^2 \over 1+r^2} ) - \pp_{x_2} ({x_1 x_2 \over 1+r^2} ) \right) \pp_{x_1}\\
 &+ \left( \pp_{x_2} ({1+x_1^2 \over 1+r^2} ) - \pp_{x_1} ({x_1 x_2 \over 1+r^2} ) \right) \pp_{x_2} .
\end{aligned}
\end{equation}
Since
$$
\pp_{x_1} ({1+x_2^2 \over 1+r^2} ) - \pp_{x_2} ({x_1 x_2 \over 1+r^2} )= - {x_1 \over 1+ r^2} ({2\over 1+r^2} +1)
$$
and
$$
\pp_{x_2} ({1+x_1^2 \over 1+r^2} ) - \pp_{x_1} ({x_1 x_2 \over 1+r^2} )= - {x_2 \over 1+ r^2} ({2\over 1+r^2} +1)
$$
formula \eqref{exL} simplifies to
\begin{equation}\label{exL1}
\begin{aligned}
L& = {1+ x_2^2 \over 1+ r^2} \pp_{x_1 x_1} + {1+ x_1^2 \over 1+ r^2} \pp_{x_2 x_2} - 2{ x_1 x_2 \over 1+r^2} \pp_{x_1 x_2}\\
 &- {x_1 \over 1+ r^2} ({2\over 1+r^2} +1) \pp_{x_1} - {x_2 \over 1+ r^2} ({2\over 1+r^2} +1) \pp_{x_2} .
\end{aligned}
\end{equation}
In the $z$-variable, from \eqref{exL1} we read the operator $L$
\begin{equation}\label{exL2}
L = {1\over 1+ x_0^2} \left[   \pp_{z_1 z_1} + \pp_{z_2 z_2} + B_0 \right]
\end{equation}
where
$$
B_0 [\phi] = \pp_{z_i} \left( b_{ij}^0 \pp_{z_j} \phi \right), \quad {\mbox {and}} \quad
b_{ij}^0 (z) = \left( K_{ij} - {1\over 1+ x_0^2} \delta_{ij} \right).
$$
For later purpose, we observe that, in a region where $z$ is bounded, say $|z| < \delta $, for a fixed $\delta $ small, the operator $B_0$ has the form
\begin{equation}\label{B0f}
\begin{aligned}
B_0 &= - \left( {2 x_0 \over 1+ x_0^2} z_1+ O(|z|^2) \right) \pp_{z_1 z_1} + O(|z|^2) \pp_{z_2 z_2} - \left( 2 x_0 z_2 + O(|z|^2 ) \right) \pp_{z_1 z_2} \\
&- \left( x_0 (1+{2\over 1+ x_0^2} ) + O(|z|) \right) \pp_{z_1} + O(|z| ) \pp_{z_2}.
\end{aligned}
\end{equation}
Using \eqref{exL2} and the explicit form of the nonlinearity $f$, we obtain the  explicit expression for the error function $S[\Psi]$ defined in \eqref{defS} is
\begin{align*}%\label{expS}
(1+ x_0^2) S[\Psi ] (z) &= (\pp_{z_1 z_1} + \pp_{z_2 z_2} ) \Psi + B_0 [\Psi] \\
& \quad  + \ve^2   (1+ x_0^2 ) e^{-{\alpha \over 2} |\log \ve | x_0^2} e^\Psi  e^{-{\alpha \over 2} |\log \ve | \ell_0(z) } ,
\end{align*}
where $\ell_0(z) $ is defined by
\begin{equation}\label{elle0}
\ell_0(z) = 2 x_0 z_1 + z_1^2 + (1+ x_0^2) z_2^2 .
\end{equation}
Using \eqref{Gammaep} and \eqref{uno}, we have that
$$
(\pp_{z_1 z_1} + \pp_{z_2 z_2} ) (\Gamma_\ve ) +\ve^2 (1+ x_0^2 ) e^{-{\alpha \over 2} |\log \ve | x_0^2} e^{\Gamma_\ve + {\alpha \over 2} |\log \ve | x_0^2 - \log (1+ x_0^2)} = 0, \quad {\mbox {in}} \quad \R^2.
$$
Thus, if for a moment  we take
$$
c_1 = 0
$$
in the definition of $\Psi_{1\ve}$ in \eqref{psi1},
we get
$$
\begin{aligned}
 (1+ x_0^2) S[\Psi_{1\ve}  ] (z) &= B_0 [\Gamma_\ve]
 + \ve^2     e^{\Gamma_\ve }  \left[  e^{-{\alpha \over 2} |\log \ve | \ell_0(z)} -1 \right],
\end{aligned}
$$
where $\ell_0$ is defined in \eqref{elle0}.
Using \eqref{B0f}, we write $B_0 [\Gamma_\ve ]$ as follows
\be \label{E1}
B_0 [ \Gamma_\ve ] = - {2 x_0 \over 1+x_0^2} z_1 \pp_{z_1 z_1} \Gamma_\ve - 2 x_0 z_2 \pp_{z_1 z_2} \Gamma_\ve - x_0 (1+ {2\over 1+ x_0^2} ) \pp_{z_1} \Gamma_\ve + E_1,
\ee
where $E_1$ is a smooth  function, uniformly bounded for $\ve $ small, in a bounded region for $z$.
The constant $c_1$ in \eqref{psi1} will be chosen to partially cancel the part of the error given by $-{2 x_0 \over 1+x_0^2} z_1 \pp_{z_1 z_1} \Gamma_\ve - 2 x_0 z_2 \pp_{z_1 z_2} \Gamma_\ve - x_0 (1+ {2\over 1+ x_0^2} ) \pp_{z_1} \Gamma_\ve$.
Using the explicit expression of $\Gamma_\ve$,
\begin{align*}
\pp_1 \Gamma_\ve (z) = -{4z_1 \over \ve^2 +|z|^2} , &\quad z_1 \pp_{11} \Gamma_\ve (z) = -{4z_1 \over \ve^2 +|z|^2} +{8z_1^3 \over (\ve^2 +|z|^2)^2}\\
z_2 \pp_{12} \Gamma_\ve (z) &= {8z_2^2 z_1 \over (\ve^2 +|z|^2)^2}
\end{align*}
 and the identity
$$
z_1 z_2^2  = { |z|^2 z_1 \over 4} - { {\mbox {Re} } (z^3 )\over 4}
$$
we obtain
\begin{align*}
-&{2 x_0 \over 1+x_0^2} z_1 \pp_{z_1 z_1} \Gamma_\ve - 2 x_0 z_2 \pp_{z_1 z_2} \Gamma_\ve - x_0 (1+ {2\over 1+ x_0^2} ) \pp_{z_1} \Gamma_\ve \\
&=4x_0 (1+{4\over 1+x_0^2}) {z_1 \over \ve^2 + |z|^2} -4x_0 (1+ {3\over 1+x_0^2} ) {|z|^2 z_1 \over (\ve^2 + |z|^2)^2} + {4x_0^3 \over 1+x_0^2} {{\mbox {Re} } (z^3 ) \over (\ve^2 + |z|^2)^2}\\
&= {4x_0 \over 1+ x_0^2} {z_1 \over \ve^2 + |z|^2} + {4 x_0^3 \over 1+ x_0^2} {{\mbox {Re} } (z^3 ) \over (\ve^2 + |z|^2)^2} + {4 x_0 (4+ x_0^2) \over 1+ x_0^2}  { \ve^2 \, z_1 \over (\ve^2 + |z|^2 )^2}
\end{align*}
We shall choose the constant $c_1$ to eliminate the first term of the last line in the error, using   the fact that
$$
(\pp_{z_1 z_1} + \pp_{z_2 z_2} ) (c_1 z_1 \Gamma_\ve ) = - 8 \, c_1 \, {z_1 \over \ve^2 + |z|^2} - 8\,  c_1 \, {\ve^2  z_1 \over (\ve^2 + |z|^2 )^2}.
$$
We take $c_1$ in \eqref{psi1} to be
\begin{equation}\label{defc1}
c_1 ={1\over 2}  {x_0 \over 1+ x_0^2}.
\end{equation}
With this choice of $c_1$ in the definition of $\Psi_{1\ve}$ in \eqref{psi1}, we get
$$
\begin{aligned}
(1+ x_0^2) S[\Psi_{1\ve}  ] (z) &= - (8 c_1 -{4 x_0 (4+x_0^2)  \over 1+ x_0^2} ) {\ve^2 z_1 \over (\ve^2 + |z|^2 )^2} + {4 x_0^3 \over 1+ x_0^2} {{\mbox {Re} } (z^3) \over (\ve^2 + |z|^2 )^2} \\
&\quad + E_1 + B_0 [c_1 z_1 \Gamma_\ve ] \\
&\quad  +{ 8  \ve^2    \over (\ve^2 + |z|^2 )^2}   \left[  e^{-{\alpha \over 2} |\log \ve | \ell_0(z) + c_1 z_1 \Gamma_\ve } -1 \right],
\end{aligned}
$$
where $E_1$ is the explicit smooth and bounded function, given by \eqref{E1}, and $\ell_0$ is defined in \eqref{elle0}. A careful look at $B_0  (c_1 z_1 \Gamma_\ve )$ gives that
$$
B_0 (c_1 z_1 \Gamma_\ve ) = - c_1 \, x_0 \, (1+ {2\over 1+ x_0^2 } ) \Gamma_\ve (z) + E_2
$$
where $E_2$ is smooth in the variable $z $ and uniformly bounded, as $\ve \to 0$.
We conclude that, taking $c_1$ in \eqref{psi1} as defined in \eqref{defc1}
\begin{equation}\label{e0}
\begin{aligned}
 (1+ x_0^2) & S [\Psi_{1\ve}  ] (z) =  - c_1 x_0 (1+ {2\over 1+ x_0^2 } ) \Gamma_\ve (z)  + {4 x_0^3 \over 1+ x_0^2} {{\mbox {Re} } (z^3) \over (\ve^2 + |z|^2 )^2} \\
&\quad - (8 c_1 - {4 x_0 (4 + x_0^2)  \over 1+ x_0^2} ) {\ve^2 z_1 \over (\ve^2 + |z|^2 )^2} + E_1 + E_2 \\
&\quad  +{ 8  \ve^2    \over (\ve^2 + |z|^2 )^2}   \left[  e^{-{\alpha \over 2} |\log \ve | \ell_0(z )+ c_1 z_1 \Gamma_\ve } -1 \right].
\end{aligned}
\end{equation}
Observe that the first term in \eqref{e0} has size $| \log \ve|$, while the second term decays, in the expended variable $z=\ve y$, as ${1\over 1+ |y|}$. We introduce a further modification to our approximate solution $\Psi_{1\ve}$ in \eqref{psi1} to eliminate those two terms in the error.

To eliminate the first term, we let $c_2$ be the constant defined by
\begin{equation}\label{defc2}
4 c_2 = c_1 \, x_0 \, (1+ {2\over 1+ x_0^2} )
\end{equation}
so that
$$
\Delta_z \left( c_2 |z|^2 \Gamma_\ve (z) \right) - c_1 x_0 (1+ {2\over 1+ x_0^2}) \Gamma_\ve (z) = 8 c_2 \left[ {\ve^2 |z|^2 \over (\ve^2 + |z|^2 )^2} - {2 |z|^2 \over \ve^2 + |z|^2} \right].
$$
To correct the second term , we introduce $h_\ve (|z|)$ the solution to
$$
h'' +{1\over s} h' - {9 \over s^2} h + {s^3 \over (\ve^2 + s^2)^2 }=0
$$
defined by
$$
h_\ve (s ) = s^3 \int_s^1 {dx \over x^7} \int_0^x {\eta^7 \over (\ve^2 + \eta^2)^2}  \, d \eta.
$$
The function $h_\ve $ is smooth and uniformly bounded as $\ve \to 0$, and $h_\ve (s ) = O(s)$, as $s \to 0$.
Writing  $z = |z| e^{i\theta}$, we have that
\be\label{H1}
H_{1\ve}  (z ) := h_\ve (|z|) \cos 3 \theta \quad {\mbox {solves}} \quad \Delta_z \left( H_{1\ve}   \right) + {{\mbox {Re} } (z^3) \over (\ve^2 + |z|^2 )^2} = 0.
\ee
We define the following improved approximation
\begin{equation*}%\label{psi2}
\begin{aligned}
\Psi_{2\ve} (x )& = {\alpha \over 2} |\log \ve | x_0^2 - \log (1+ x_0^2) +  \Gamma_\ve (z)  \left( 1+ c_1 z_1 + c_2 |z|^2 \right)  \\
& \quad +
 {4 x_0^3 \over 1+ x_0^2} H_{1\ve} (z) ,
\end{aligned}
\end{equation*}
with $H_{1\ve}$ defined in \eqref{H1} and $c_2$ as in \eqref{defc2}. The new error function becomes
\begin{equation*}%\label{e0}
\begin{aligned}
(1+ x_0^2) \, S [\Psi_{2\ve}  ] (z) & =  E_3 - (8 c_1 - {4 x_0 (4+x_0^2)  \over 1+ x_0^2} ) {\ve^2 z_1 \over (\ve^2 + |z|^2 )^2} \\
&\quad  +{ 8  \ve^2    \over (\ve^2 + |z|^2 )^2}   \left[  e^{f_{x_0} (z)  } -1 \right],
\end{aligned}
\end{equation*}
where
\begin{equation}\label{deffx0}
\begin{aligned}
f_{x_0} (z ) &= -{\alpha \over 2} |\log \ve | \ell_0(z )+ (c_1 z_1  + c_2 |z|^2 )  \Gamma_\ve +  {4 x_0^3 \over 1+ x_0^2} H_{1\ve} (z) \\
&= (- \alpha  x_0 + 4 c_1) |\log \ve| z_1 + c_1 z_1 \log {8\over (1+ |{z \over \ve}|^2)^2} +  {4 x_0^3 \over 1+ x_0^2} H_{1\ve} (z)\\
&-{\alpha \over 2} |\log \ve| ( z_1^2 + (1+ x_0^2) z_2^2) + c_2 |z|^2 (-4|\log \ve| + \log  {8\over (1+ |{z \over \ve}|^2)^2} )
\end{aligned}
\end{equation}
with $\ell_0$ given by \eqref{elle0} and $H_{1\ve}$ in \eqref{H1}. As before, the term $E_3$ in \eqref{e0} is a smooth function, which is uniformly bounded as $\ve \to 0$.

Finally we introduce a global correction to cancel the bounded term $E_3$ in the error.
Our global final approximation is
\begin{equation}\label{psifinal}
\begin{aligned}
\Psi_{ \alpha}  (x) &=  {\alpha \over 2} |\log \ve | x_0^2 - \log (1+ x_0^2)  \\
&+ \eta_\delta (x) \left[ \Gamma_\ve (z)  \left( 1+ c_1 z_1 + c_2 |z|^2 \right) +
 {4 x_0^3 \over 1+ x_0^2} H_{1\ve} (z) \right]  + H_{2\ve}  (x)
 \end{aligned}
\end{equation}
where
\begin{equation}\label{defeta}
\eta_\delta (x ) = \eta ({|z| \over \delta} ),
\end{equation}
with $\eta $ a fixed smooth function with
\begin{equation}\label{defeta2}
\eta(s) = 1 , \quad {\mbox {for}} \quad s \leq {1 \over 2}, \quad \eta(s) = 0  , \quad {\mbox {for}} \quad s \geq 1.
\end{equation}
Let $g$ be the function with compact support defined by
$$
\begin{aligned}
g(x) &= \eta_\delta (x) E_3+ 2 \nabla \eta_\delta \nabla \left( \Gamma_\ve (z)  \left( 1+ c_1 z_1 + c_2 |z|^2 \right) +
 {4 x_0^3 \over 1+ x_0^2} H_{1\ve} (z) \right) \\
 &+ \left( \Gamma_\ve (z)  \left( 1+ c_1 z_1 + c_2 |z|^2 \right) +
 {4 x_0^3 \over 1+ x_0^2} H_{1\ve} (z) \right) \Delta \eta_\delta \\
& + B_0 \left[\eta_\delta \left( \Gamma_\ve (z)  \left( 1+ c_1 z_1 + c_2 |z|^2 \right) +
 {4 x_0^3 \over 1+ x_0^2} H_{1\ve} (z)\right) \right] \\
 &- \eta_\delta B_0 \left[ \Gamma_\ve (z)  \left( 1+ c_1 z_1 + c_2 |z|^2 \right) +
 {4 x_0^3 \over 1+ x_0^2} H_{1\ve} (z) \right].
\end{aligned}
$$
It is easy to check that
$$
\| g (x) \|_{L^\infty} \leq C_\delta
$$
for some positive constant which depends on $\delta$.
Proposition \ref{prop2} guarantees the existence of a solution to problem
$$
\begin{aligned}
\Delta H_{2\ve} + B_0 [H_{2\ve} ] + g &= 0 , \quad {\mbox {in}} \quad \R^2
\end{aligned}
$$
satisfying
\begin{equation*}
%\label{bH2}
| H_{2\ve } (x) | \leq C_\delta (1+ |x|^2).
\end{equation*}
The solution is given up to the addition of a constant.
We define the  function $H_{2\ve} (x)$ in \eqref{psifinal} to be the one which furthermore satisfies
$$
H_{2\ve} ( (x_0, 0) ) = 0.
$$
With this choice for our final approximation $\Psi_\alpha$ in \eqref{psifinal}, the error function takes the form
\begin{equation*}%\label{ef}
\begin{aligned}
&(1+ x_0^2)  S[\Psi_\alpha ] (x) = \eta_\delta \left( (1+ x_0^2 ) S[\Psi_{2\ve} ] - E_3 \right) \\
&+ (1-\eta_\delta ) {8\ve^2 \over (\ve^2 + |z|^2 )^2} e^{f_{x_0} (z)}  e^{(\eta_\delta -1) ( \Gamma_\ve (z)  \left( 1+ c_1 z_1 + c_2 |z|^2 \right) +
 {4 x_0^3 \over 1+ x_0^2} H_{1\ve} (z)) + H_{2\ve} } \\
&+\eta_\delta  {8\ve^2 \over (\ve^2 + |z|^2 )^2} e^{f_{x_0} (z) }\left( e^{(\eta_\delta -1) ( \Gamma_\ve (z)  \left( 1+ c_1 z_1 + c_2 |z|^2 \right) +
 {4 x_0^3 \over 1+ x_0^2} H_{1\ve} (z)) + H_{2\ve} } -1 \right)
\end{aligned}
\end{equation*}
where $\eta_\delta $ is given in \eqref{defeta} and $f_{x_0}$ in \eqref{deffx0}.

When $1-\eta_\delta =0$,
it is important to realize that one has
$$
\begin{aligned}
f_{x_0} (z)   &= (-\alpha x_0 + 4 c_1 ) |\log \ve | z_1 +  c_1 \log {8 \over (1+ |{z\over \ve} |^2)^2}  z_1 + O(|z|).
\end{aligned}
$$
and
$$
H_{2\ve } (z) = O(|z|).
$$
In the complementary region, where $1-\eta_\delta \not= 0$, we have that
$$
e^{f_{x_0} (z) } \leq C e^{-|z|^2},
$$
We conclude that, the error $S[\Psi_\alpha]$ of the approximate solution $\Psi_\alpha$ defined in \eqref{psifinal}, can be estimated as follows: in the region $|z| < \delta$ it has the form
\begin{equation}\label{estin}
\begin{aligned}
(1+ x_0^2) S [\Psi_\alpha ] (z)  &=  (-\alpha x_0 + 4 c_1 ) |\log \ve | {  8 \ve^2  \, z_1  \over (\ve^2 + |z|^2 )^2}  \\&
+  {  \ve^2      \over (\ve^2 + |z|^2 )^2} \,  O \left(|z| \log (2+ |{z \over \ve} |) \right) .
\end{aligned}
\end{equation}
and  in the region $|z|>\delta$,
\begin{equation}\label{estout}
|(1+ x_0^2 ) S[\Psi_\alpha ] (z) |\leq C {\ve^2 \over (\ve^2 + |z|^2)^2} e^{-|z|^2},
\end{equation}
for some constant $C>0$.

\section{The inner-outer gluing system}

We consider the approximate solution $\Psi_\alpha(x)$ we have  built in Section \ref{approx} and look  for a solution $\Psi(x) $
of the equation
\be\label{ecuacion}
S[\Psi] := L[\Psi]  + \ve^2 f( \Psi - \mu  |x|^2 )=0 \inn \R^2
\ee
where
$$
f(u)= e^u, \quad  \mu = \frac \alpha 2 |\log\ve|.
$$
We look for $\Psi$ of the form
\be \label{ansat}
\Psi (x) = \Psi_{ \alpha} (x) + \varphi(x).
\ee
Observe that, by construction, the function $\Psi_\alpha$ is symmetric with respect to $x_2$, in the sense that $\Psi_\alpha (x_1, x_2) = \Psi_\alpha (x_1 , -x_2)$. We thus also ask  $\Psi (x)$ to belong to the class of functions that symmetric with respect to $x_2$.

Here $\varphi(x)$ is a smaller perturbation of the first approximation, which we choose of the form
\be \label{ff1}
\varphi(x)=   \eta_\delta (x) \phi\left ({z \over \ve} \right)  +  \psi (x) .
\ee
We recall that
$$
%y=\ve z, \quad
z_1 = x_1- x_0 , \quad z_2 = {x_2 \over \sqrt{1+ x_0^2}}, \quad
\eta_\delta (x)  =  \eta \left ({|z| \over \delta} \right ),
$$
with $\eta$ fixed in \eqref{defeta2}.
Thus, our aim is to find $\varphi(x)$ so that
\begin{align*} % \label{pp1}
S[\Psi_\alpha + \varphi ] = {\mathcal L}_{\Psi_\alpha} [\varphi]
+N_{\Psi_\alpha} [\varphi ] + E_\alpha = 0 \quad {\mbox {in}} \quad \R^2
\end{align*}
where
$$
\begin{aligned}
E_\alpha &= S (\Psi_\alpha )\\
{\mathcal L}_{\Psi_\alpha} [\varphi] &= L [\varphi] + \ve^2 f' (\Psi_\alpha - \mu |x|^2 ) \varphi\\
N_{\Psi_\alpha } (\varphi ) &= \ve^2 \left[ f (\Psi_\alpha - \mu |x|^2 + \varphi ) -
f  (\Psi_\alpha - \mu |x|^2 ) - f' (\Psi_\alpha - \mu |x|^2 ) \varphi \right] .
\end{aligned}
$$
%We observe that
%$$
%N_{\Psi_\alpha} [\varphi] = \ve^2 f'' (\Psi_\alpha - \mu |\log \ve| |x|^2 + s \varphi ) {\varphi^2 \over 2}
%$$
%for some $s\in (0,1)$.

The following expansion holds.
$$
\begin{aligned} S( \Psi_\alpha +\vp) \ = &\
\eta_\delta\big [  L_x[\phi]  +  \ve^2 f'( \Psi_\alpha - \mu |x|^2 ) (\phi + \psi)  + E_\alpha +  N_{\Psi_\alpha } (\eta_\delta \phi + \psi ) \big]
\\
& +    L_x[\psi]   + (1-\eta_\delta )\left[    \ve^2   f'( \Psi_\alpha - \mu |x|^2 ) \psi  +   E_\alpha +  N_{\Psi_\alpha } (\eta_\delta \phi + \psi ) \right]\\
&+ L_x[\eta_\delta] \phi +  K_{ij} (x)\pp_{x_i} \eta_\delta\pp_{x_j}\phi  .
\end{aligned}
$$
Thus $\Psi$ given by \equ{ansat}-\equ{ff1} solves \equ{ecuacion} if the pair $(\phi,\psi)$ satisfies  the system of equations
\be
 L_x[\phi]  +  \ve^2 f'( \Psi_\alpha - \mu |x|^2 ) (\phi + \psi)  + E_\alpha +  N_{\Psi_\alpha } (\eta_\delta \phi + \psi ) \, =\, 0,  \quad |z|< 2\delta,
\label{in}\ee
and
\be\begin{aligned}
    L_x[\psi]  \ +  &\,   (1-\eta_\delta )  \big[
   \ve^2 f'( \Psi_\alpha - \mu |x|^2 ) \psi \, +   \, N_{\Psi_\alpha } (\eta_\delta \phi + \psi )\, +  \, E_\alpha\big ]  \\
+& \ L_x[\eta_\delta] \phi\, +\,  K_{ij} (x)\pp_{x_i} \eta_\delta\pp_{x_j}\phi \ =\  0 \inn \R^2 ,\end{aligned} \label{out}  \ee
which we respectively call  the ``inner'' and ``outer'' problems.

\medskip
Let us write \equ{in} in terms of the variable $y=\frac z\ve$. First we recall that for a function $\vp = \vp(z) $ we have
$$
L_x[\vp]   =   \frac 1{1+x_0^2} \big[ \,\Delta_z \vp +  \pp_{z_i} ( b^0_{ij} (z) \pp_{z_j} \vp)  \,\big ]
$$
for smooth functions $b^0_{ij}(z)$ with $ b^0_{ij}(0) =0$.  Hence for $\phi(y)=\varphi(\ve y) $ we have
$$
\ve^2(1+ x_0^2) L_x[\phi]  =    \Delta_y \phi  +      \pp_{y_i} ( b^0_{ij} (\ve y) \pp_{y_j} \phi)
$$
We notice that, in terms of the variable $y$ we can write
$$
\Psi_\alpha (x)  - \mu |x|^2  =  \Gamma_0(y)  - 4\log \ve -\log (1+ x_0^2)    +   |\log\ve|  \ve y_1 (-  \alpha x_0    +  4 c_1)  + \RR(y)
$$
where
$$
\Gamma_0 (y) = \log {8 \over (1+ |y|^2)^2},
$$
and $\RR(y)$ can be bounded as
\be\label{theta}    | D_y  \RR(y)|  +    |\RR(y)| \,\le \,  C \ve |y|  \log (2+|y|). \ee
At this point we make the following  choice for the parameter $\alpha$. We set
\be\label{alpha}
\alpha \ = \  \frac {4c_1} {x_0} + {\alpha_1} = {2 \over 1+ x_0^2} + \alpha_1  , \qquad |\alpha_1| \le \frac 1{|\log\ve|^{\frac 12} } .
\ee
In the expanded variable $y$ and with this choice of $\alpha$, estimates \eqref{estin} and \eqref{estout} for $E_\alpha = S[\Psi_\alpha ]$
we obtain, in the region $|y|<{\delta \over \ve}$,
\begin{equation}\label{ba1}
\tilde E_\alpha :=\ve^2(1+ x_0)^2 E_\alpha \ = \
- \frac{ 8 \alpha_1\ve y_1 }  {(1+|y|^2)^2}|\log\ve| \ + \  O \big(  \frac{\ve}{1+ |y|^{3-a}} \big )
\end{equation}
for $a \in (0,1)$.
Indeed, we first observe that
$$ \begin{aligned} \ve^4(1+ x_0)^2   f'( \Psi_\alpha (x)  - \mu |x|^2)   =  e^{\Gamma_0(y)} e^{  - \ve y_1 |\log\ve|  \alpha_1 x_0   + \RR(y)} , \end{aligned} $$
with $\RR$ as in \eqref{theta}.
For $|y|<  \frac  1{\ve|\log\ve|} $ we can expand
$$
e^{  - \ve y_1 |\log\ve|  \alpha_1 x_0  + \RR(y)} =  (1  - \ve y_1 |\log\ve|  \alpha_1 x_0    + O( \ve |y|  \log (2+|y|) +  \ve^2 \alpha_1^2|y|^2 \log^2\ve ).
$$
Hence
$$\begin{aligned}
\ve^4(1+ x_0)^2   f'( \Psi_\alpha (x)  - \mu |x|^2)  =   e^{\Gamma_0}  + b_0(y), \quad {\mbox {with}} \\
  b_0(y) =    e^{\Gamma_0} (- \ve y_1 |\log\ve|  \alpha_1 x_0)  +  O  \big ( \frac{ \ve}{1+|y|^3}  \log (2+|y|) \big ).
\end{aligned} $$
For $   \frac  1{\ve|\log\ve|} < |y| < \frac \delta\ve $ we have
$$
\ve^4(1+ x_0)^2  f'( \Psi_\alpha (x)  - \mu |x|^2) =  e^{\Gamma_0(y)} \ve^{-c\delta} =  O( \frac{\ve} { 1 +|y|^{3-a}} )
$$
for some small  $0<a<1$.  Hence we globally have
$$
\ve^4 (1+ x_0)^2  f'( \Psi_\alpha (x)  - \mu |x|^2)  = e^{\Gamma_0(y)}  +  b_0(y)
$$
where
\begin{equation}\label{defb0}
b_0(y) =  - \frac{ 8 \alpha_1\ve y_1 |\log\ve|}  {(1+|y|^2)^2} +  O \left(  \frac{\ve}{1+ |y|^{3-a}} \right ).
\end{equation}
Expansion \eqref{ba1} readily follows.
 Similarly, for $b_0$ of this type we get the expansion
\begin{equation}\label{house2}
\mathcal N (\vp) := \ve^2(1+ x_0)^2 N_{\Psi_\alpha} (\vp)  =  ( e^{\Gamma_0(y)}   +   b_0)  ( e^\vp -1 -\vp)
\end{equation}
%and
%$$
%\ttt  E_\alpha  =\ve^2(1+ x_0)^2 E_\alpha \ = \
%- \frac{ 8 \alpha_1\ve y_1 }  {(1+|y|^2)^2}|\log\ve| \ + \  O \big(  \frac{\ve}{1+ |y|^{3-a}} \big )
%$$
Then, the inner problem \equ{in} becomes
\be\label{inner}
\Delta_y \phi  +    f'(\Gamma_0) \phi +   B[\phi]  +   \mathcal N (\psi + \eta_\delta \phi ) +   \ttt  E_\alpha +  (f'(\Gamma_0) + b_0 ) \psi   = 0 \inn B_R
\ee
where $R= \frac {2\delta}{\ve} $ and
\begin{equation}\label{defB}
B[\phi] = \pp_{y_i} (b_{ij}^0(\ve y) \pp_j\phi)  + b_0(y) \phi .
\end{equation}
The idea is to solve this equation, coupled with the outer problem \equ{out} in such a way that  $\phi$ has the size of the error $\ttt E_\alpha$ with two powers less of decay in $y$, say
$$
(1+|y|) |D_y\phi(y)|+  |\phi(y)| \le  \frac {C \ve|\log\ve| } {1+ |y|^{1-a}}.
$$
We write the outer problem as
\be\label{outer}
    L_x[\psi]  \ +  \,   G(\psi, \phi,\alpha)  \ =\  0 \inn \R^2
\ee
where

\be
G(\psi, \phi,\alpha) =
   V(x)\psi \, +   \, N(\eta_\delta \phi + \psi ) \, +  \, E^o (x)\,
+ \ A[\phi],
\label{GG}\ee
with
$$\begin{aligned}
V(x)\ = &\ (1-\eta_\delta ) \ve^2 f'( \Psi_\alpha - \mu |x|^2 ) , \\  N(\vp) \  = &\  (1-\eta_\delta )  N_{\Psi_\alpha } (\vp )\\
E^o (x)\ = &\  (1-\eta_\delta) E_\alpha \\  A[\phi]\ =&\  L_x[\eta_\delta] \phi\, +\,  K_{ij} (x)\pp_{x_i} \eta_\delta\pp_{x_j}\phi.
\end{aligned}$$
We observe that for $|z|> \frac \delta 2 $ we have
$$
\Psi_\alpha(x) -  \frac\alpha 2 |\log\ve||x|^2 =  O(1)  + \frac\alpha 2 |\log\ve||x_0|^2  - \frac\alpha 2 |\log\ve||x|^2
$$
An important point is that from the choice of $\alpha$ in \equ{alpha} we have that
$$
\frac\alpha 2|x_0|^2  < \frac {x_0^2+\sigma'}{1+x_0^2},
$$
for some $\sigma'>0$ arbitrarily small.
It follows that there exists $0<b<1$, $b={1-\sigma' \over 1+ x_0^2}$  which depends on $x_0$,
$$
\ve^2\exp( \Psi_\alpha(x) -  \frac\alpha 2 |\log\ve||x|^2 )  =  O(\ve^{1+b  } e^{-|\log\ve| |x|^2} ).
$$
Hence the following bounds hold.
\begin{equation}\label{house1}
\begin{aligned} |V(x)|\ \le&\   \ve^{1+b} e^{ -  |x|^2}  , \\
|N(\eta_\delta\phi+ \psi)|  \ \le &\   \ve^{1+b}  e^{ - |x|^2} (|\phi|^2\eta_\delta^2 +  |\psi|^2) \\
|E_\alpha^o (x)|\ \le &\   \ve^{1+b}  e^{ -  |x|^2}.
\end{aligned}
\end{equation}

In the next two sections we will establish linear results that are the basic tools to solve system \equ{inner}-\equ{outer} by means of a
fixed point scheme.

\section{Linerized inner problem}

In this section we consider the problem
\be\label{00}
\begin{aligned}
\Delta \phi + e^{\Gamma_0(y)}\phi  +  h(y)= 0 \inn \R^2.  %\phi(y) \to 0 \ass  |y|\to +\infty .
\end{aligned}
\ee
We want to solve this problem  in a topology of decaying functions in H\"older sense. For numbers $m>2$, $0<\alpha<1$ we consider the following norms
\be\label{norma} \begin{aligned}
\|h\|_{m}  = & \sup_{y\in \R^n} (1+|y|)^m|h(y)|, \\
\|h\|_{m,\alpha}  = &\|h\|_{m}  +   (1+|y|)^{m + \alpha}[h]_{B_1(y),\alpha} ,
\end{aligned}
\ee
where we use the standard notation
$$
[h]_{A,\alpha} = \sup_{z_1,z_2\in A}  \frac {|h(z_1) -h(z_2)| } {|z_1-z_2|^\alpha}
$$
We consider the functions $Z_i(y)$ defined as
$$ Z_i(y)  =  \pp_{y_i}  \Gamma_0(y) =   -4 \frac {y_i} {|y|^2+1}, \  i=1,2, \quad  Z_0(y) =    2+  y\cdot \nn \Gamma_0(y) =   2 \frac {1 -2|y|^2} {|y|^2+1}  .     $$

\begin{lemma}\label{lemat}
Given $m>2$ and $0<\alpha < 1$,
there exists a $C>0$ and a solution $\phi =  \TT [ h]$ of problem $\equ{00}$ for each $h$ with $\|h\|_m <+\infty$ that defines a linear operator of $h$ and satisfies the estimate
\be\label{cota} \begin{aligned}
& (1+|y|) | \nn \phi (y)|  +  | \phi (y)| \\  &
\,  \le  \,  C \big [ \,  \log (2+|y|) \,\big|\int_{\R^2} h Z_0\big|  +    (1+|y|) \sum_{i=1}^2 \big|\int_{\R^2} h Z_i\big|  +  (1+|y|)^{2-m} \|h\|_{m}   \,\big ]. \end{aligned}
\ee
In addition, if
$\|h\|_{m,\alpha} <+\infty$, we have
\be\label{cotaa} \begin{aligned}
& (1+|y|^{2+\alpha})  [D^2_y \phi]_{B_1(y),\alpha}  +(1+|y|^2)  |D^2_y \phi (y)| \\  &
\,  \le  \,  C \big [ \,  \log (2+|y|) \,\big|\int_{\R^2} h Z_0\big|  +    (1+|y|) \sum_{i=1}^2 \big|\int_{\R^2} h Z_i\big|  +  (1+|y|)^{2-m} \|h\|_{m,\alpha}   \,\big ]. \end{aligned}
\ee

\end{lemma}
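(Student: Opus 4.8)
The plan is to build the solution by the classical linear theory for the Liouville-linearized operator $\Delta + e^{\Gamma_0}$ in $\R^2$, whose bounded kernel is spanned by $Z_0, Z_1, Z_2$ and which has no other decaying solutions. First I would pass to a weighted scheme: since $m>2$ and $h$ decays like $(1+|y|)^{-m}$, the equation $\Delta\phi = -e^{\Gamma_0}\phi - h$ has a right-hand side in $L^1(\R^2)$ once we control $\phi$, so a logarithmically growing potential is the natural ambient space. The standard device is to seek $\phi$ orthogonal (in the $L^2$ sense against the weight $e^{\Gamma_0}$, or equivalently after subtracting suitable multiples of the $Z_i$) to the kernel. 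Concretely, I would introduce the three ``solvability'' functionals $c_i(h) = \int_{\R^2} h Z_i$, $i=0,1,2$, and first treat the model problem where these vanish; there a solution exists in the class $|\phi(y)|\le C(1+|y|)^{2-m}\|h\|_m$ by an a priori estimate plus approximation on large balls $B_\rho$ with Dirichlet data, letting $\rho\to\infty$. The a priori estimate is proved by contradiction/blow-up: if a sequence $\phi_n$ with $\|h_n\|_m\to 0$ had $\|\phi_n\|_*=1$ (the $*$-norm being the left side of \eqref{cota} without the $\log$ and linear terms, i.e. the $(1+|y|)^{2-m}$-weighted sup of $|\phi|+(1+|y|)|\nabla\phi|$), then a subsequence converges locally to a bounded solution of $\Delta\phi+e^{\Gamma_0}\phi=0$, hence a combination of $Z_0,Z_1,Z_2$; the decay $2-m<0$ forces this combination to be $0$ if $m<2$... wait, here $m>2$ so $2-m<0$ and all $Z_i$ are bounded but do not decay, so the limit must vanish, contradicting that the mass escaped — one rules out escape to infinity using the explicit decay of $e^{\Gamma_0}$ and an ODE/barrier argument on the radial part.

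For the general $h$ (no orthogonality assumed), I would write $\phi = \TT[h] = \phi_0 + \sum_{i=0}^2 d_i(h)\, \zeta_i$, where $\phi_0$ solves the problem with $h$ replaced by $h - \sum c_i(h)\, e^{\Gamma_0}\zeta_i/\|e^{\Gamma_0}\zeta_i\|^2$-type corrections so that the corrected right-hand side is orthogonal to the kernel, and $\zeta_i$ are fixed explicit functions (not decaying, with controlled growth: $\zeta_0\sim\log|y|$ and $\zeta_1,\zeta_2$ linearly growing) chosen so that $\Delta\zeta_i + e^{\Gamma_0}\zeta_i$ equals exactly the correction terms. This produces the three terms $\log(2+|y|)|c_0(h)|$, $(1+|y|)|c_i(h)|$ in \eqref{cota}. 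Linearity of $h\mapsto\TT[h]$ is automatic since every step (the functionals $c_i$, the orthogonal projection, and the solution operator for orthogonal data obtained as a limit of linear Dirichlet solvers) is linear. I would record that the map is well-defined independently of $\rho$ by uniqueness within the class (again the blow-up estimate gives uniqueness up to kernel, and the normalization pins it down).

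The second estimate \eqref{cotaa} is then a routine bootstrap: away from the origin, rescale to unit balls $B_1(y)$ with $|y|$ large, apply interior Schauder estimates to $\Delta\phi = -e^{\Gamma_0}\phi - h$ on $B_2(y)$, and use that $e^{\Gamma_0}$ decays like $|y|^{-4}$ (so $e^{\Gamma_0}\phi$ is negligible against the target) together with the already-established $C^1$ bound for $\phi$ and the $\|h\|_{m,\alpha}$ bound for the data; near the origin it is just interior elliptic regularity on a fixed ball. Matching the weights $(1+|y|^2)$ and $(1+|y|^{2+\alpha})$ to the scaling of $D^2\phi$ and its Hölder seminorm is bookkeeping. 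I expect the \textbf{main obstacle} to be the a priori estimate in the orthogonal class — specifically, ruling out loss of compactness at infinity in the blow-up argument when $m$ is only slightly above $2$, since then the natural weighted norm barely excludes the non-decaying kernel elements; this is handled by a careful barrier/ODE analysis of the radial Green's function of $\Delta + e^{\Gamma_0}$ (which is explicit for the Liouville problem), ensuring the correction functions $\zeta_i$ have exactly the claimed growth rates and no worse.
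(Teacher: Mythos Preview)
Your proposal is correct in outline, but it takes a genuinely different route from the paper's proof. The paper does \emph{not} use a blow-up/contradiction a priori estimate plus Dirichlet approximation on expanding balls. Instead it works entirely by explicit Fourier decomposition in polar coordinates: writing $h=\sum_k h_k(r)e^{ik\theta}$ and $\phi=\sum_k \phi_k(r)e^{ik\theta}$, it solves each radial ODE $\mathcal L_k[\phi_k]+h_k=0$ by variation of parameters. For $k=0$ the explicit kernel element $z(r)=\frac{2r^2-1}{1+r^2}$ produces the $\log(2+r)\,|\!\int hZ_0|$ contribution directly from the integral formula; for $|k|=1$ the kernel $z(r)=\frac{4r}{1+r^2}$ produces the linear-growth term $(1+r)\sum_i|\!\int hZ_i|$; for $|k|\ge 2$ a positive supersolution built from the mode-2 kernel (which behaves like $r^2$ at both ends) gives $|\phi_k|\le \frac{C}{k^2}(1+r)^{2-m}\|h\|_m$, and the $k^{-2}$ factor makes the series sum. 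The gradient and $C^{2,\alpha}$ bounds are then obtained exactly as you describe, by rescaling $\phi_R(z)=R^{m-2}\phi(R(e+z))$ and applying interior Schauder estimates on unit balls.

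Your approach---a priori estimate by compactness/blow-up in the class orthogonal to $Z_0,Z_1,Z_2$, then adding back explicit growing correctors $\zeta_i$---is the standard abstract machinery and would work; the ``main obstacle'' you flag (ruling out escape to infinity) is real but routine since the potential $e^{\Gamma_0}$ decays like $|y|^{-4}$. The paper's mode-by-mode construction is more elementary and fully constructive: it bypasses the compactness argument entirely, needs no approximation on large balls, and makes the origin of the three anomalous terms in \eqref{cota} completely transparent (each comes from a single Fourier mode). Your decomposition $\phi=\phi_0+\sum d_i\zeta_i$ recovers the same structure, but the paper never separates the ``orthogonal part'' from the ``correction''---the variation-of-parameters formula handles both at once.
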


\proof
The existence of this inverse is essentially known, we provide a proof. We assume that $h$ is complex-valued. Setting $y=re^{i\theta}$.
We write
$$ h(y) =  \sum_{k=-\infty}^{\infty}  h_k (r) e^{ik\theta} , \quad \phi(y)= \sum_{k=-\infty}^{\infty} \phi_k (r) e^{ik\theta}
$$
The equation is equivalent to
\be
\LL_k [\phi_k] + h_k(r) = 0 , \quad r\in(0,\infty)
\label{01}\ee
where
$$
\LL_k [\phi_k] =   \phi_k''  + \frac 1r \phi_k'  +  e^{\Gamma_0}  \phi_k   - \frac{k^2}{r^2} \phi_k
$$
Using the formula of variation of parameters,  the following formula (continuously extended to $r= \frac 1{\sqrt{2}}$ defines a smooth solution of \equ{01} for $k=0$:
%$$
%\phi_0  (r) =   \ttt Z(r)  \int_0^r h(s) Z(s) s\,ds  +   Z(r) \int_r^\infty h(s) \ttt Z(s) s\,ds
%$$
$$
\phi_0  (r) =     -z(r)\int_{\frac 1{\sqrt{2}}}^r  \frac {ds}{ s z(s)^2}  \int_0^s h_0(\rho) z(\rho) \rho\,ds, \quad z(r) = \frac{2r^2 - 1}{ 1 + r^2}
$$
Noting that $\int_0^\infty h_0(\rho) z(\rho) \rho\,ds = \frac 1{2\pi}\int_{\R^2} h(y)Z_0(y)\, dy      $
we see that this function satisfies
$$
|\phi_0(r)| \, \le \,   C\big[ \, \log (2 + r) \Big| \int_{\R^2} h(y)Z_0(y)\, dy      \Big|   \, +\,  (1+r)^{2-m} \|h\|_{ m} \big ].
$$
Now we observe that
$$
\phi_k  (r) =     -z(r)\int_0^r  \frac {ds}{ s z(s)^2}  \int_0^s h_k(\rho) z(\rho) \rho\,ds, \quad z(r) = \frac{4r}{ 1 + r^2}
$$
solves \equ{01} for $k =-1,1$ and satisfies
$$
|\phi_k(r)| \, \le \,   C\big[ \,  (1+ r) \sum_{j=1}^2 \Big| \int_{\R^2} h(y)Z_j(y)\, dy      \Big|   \, +\,  (1+r)^{2-m} \|h\|_{ m} \big ].
$$
For $k=2$ there is a function $z(r)$ such that $\mathcal L_2[z] = 0 $,  $z(r)\sim r^{2}$ as $r\to 0$ and as $r\to \infty$.  For $|k|\ge 2$ we have that
$$
\bar\phi_k  (r) =     \frac 4{ k^2} z(r)\int_0^r  \frac {ds}{ s z(s)^2}  \int_0^s |h_k(\rho)| z(\rho) \rho\,ds,
$$
is a positive supersolution for equation \equ{01}, hence the equation has a unique solution $\phi_k$  with $|\phi_k(r)| \le \bar\phi_k  (r)$. Thus
$$
|\phi_k(r)| \, \le \,    \frac C {k^2}   (1+r)^{2-m} \|h\|_{ m}, \quad |k|\ge 2.
$$
Thus, for the functions $\phi_k$ defined  $$\phi(y)= \sum_{k=-\infty}^{\infty} \phi_k (r) e^{ik\theta} $$  defines a linear operator of functions $h$ which is a solution of equation \equ{00} which, adding up the individual estimates above,  it satisfies the estimate
\be\label{cot} \begin{aligned}
 | \phi (y)|
\,  \le  \,  C \big [ \,  \log (2+|y|) \,\big|\int_{\R^2} h Z_0\big|  +    (1+|y|) \sum_{i=1}^2 \big|\int_{\R^2} h Z_i\big|  +  (1+|y|)^{2-m} \|h\|_{m}   \,\big ]. \end{aligned}
\ee
As a corollary we find that similar bounds are obtained for first and second derivatives. In fact, let us set
for a large $y = R e$, $R=|y|\gg 1$,
$
\phi_R(z)  =  {R^{m-2}} \phi (R(e+z))    .
$
Then we find
$$
\Delta_z\phi_R  +   R^{-2} W(R(e+ z))\phi_R +  h_R (z)   = 0, \quad |z| <\frac 12
$$
where $h_R(z) = R^{-m}h (R(e+z)) $.
Let us set,
$$
\delta_i =  \Big| \int_{\R^2}  h Z_i \Big | , \quad i=0,1,2.
$$
Then  from \equ{cot}, and a standard elliptic estimate we find
 $$\|\nn_z \phi_R \|_{L^\infty( B_{\frac 14}(0) )} + \|\phi_R \|_{L^\infty( B_{\frac 12}(0) )} \, \le\,  C\Big [ \delta_0 \log R +   \sum_{i=1}^2\delta_i R +   \| h\|_{m} \Big ].$$
  Clearly we have
$$
   \|h_R \|_{L^\infty(B_{\frac 12} (0))} \ \le\  C\| h\|_{m}, \quad    [h_R ]_{B_{\frac 12} (0)} \ \le\  C\| h\|_{m,\alpha}  .
$$
From interior Schauder estimates and the bound for $\phi_R$  we then  find
$$
\|D^2_{z} \phi_R \|_{L^\infty( B_{\frac 14}(0) ) } +  [ D^2_{z} \phi_R ]_{B_{\frac 14}(0), \alpha} \, \le\,  C\Big [ \delta_0 \log R +   \sum_{i=1}^2\delta_i R +   \| h\|_{m,\alpha} \Big ].
$$
From these relations, estimates \equ{cota} and \equ{cotaa} follow. \qed

\bigskip
We consider now the problem for a fixed number $\delta>0$ and a sufficiently large $R>0$ we consider the equation
\be
\Delta \phi  +  e^{\Gamma_0} \phi  + B[\phi]  + h(y)  = \sum_{j=0}^2  c_i e^{\Gamma_0} Z_i \inn B_R
\label{eee} \ee
where $c_i$ are real numbers,
\be\label{B}
B[\phi]=   \pp_{i} ( b_{ij} (y)\pp_j \phi)  + b_0(y)\phi
\ee
and the coefficients satisfy the bounds
\be\label{coef} \begin{aligned}
 & \,  (1+|y|)^{-1} |b_{ij}(y)|  +   | D_yb_{ij}(y)|   \\     +  &  \      (1+|y|)^2 |b_{0}(y)| + (1+|y|)^3| D_yb_{0}(y)| \\
 \, \le\,  & \,\, \delta R^{-1} \inn B_R\, . \end{aligned}   \ee
For a function $h$ defined in $A\subset \R^2$ we denote by $\|h\|_{m,\alpha,A}$ the numbers defined in \equ{norma}
 but with the sup  taken with elements in $A$ only, namely
  $$
  \|h \|_{m, A}=  \sup_{y\in A} | (1+|y|)^m h(y)|] ,  \quad   \|h \|_{m,\alpha, A} =  \sup_{y\in A} (1+|y|)^{m+\alpha}[ h]_{B(y,1)\cap A}   +     \|h \|_{m, A}
 $$
 Let us also define, for a function of class $C^{2,\alpha}(A)$,
 \be\label{star}
 \| \phi\|_{*,m-2, A} =  \|D^2\phi\|_{m,\alpha, A}   +  \|D\phi\|_{m-1,A} + \|D\phi\|_{m-2,A} .
 \ee
 In this notation we omit the dependence on $A$ when $A= \R^2$. The following is the main result of this section.

\begin{prop}\label{prop1}  There are numbers $\delta,C>0$ such that for all sufficiently large $R$ and any differential operator $B$ as in $\equ{B}$ with bounds $\equ{coef}$,
Problem $\equ{eee}$ has a solution $\phi = T[h]$ for certain scalars $c_i= c_i[h]$, that defines a linear operator of $h$ and satisfies
\begin{align*} % \label{cota3}
\| \phi\|_{*,m-2, B_R} \ \le\ C \|h \|_{m,\alpha, B_R} .
\end{align*}
In addition, the linear functionals $c_i$ can be estimated as
\begin{align*} % \label{estc}\begin{aligned}
c_0[h]\, = & \, \gamma_0\int_{B_{R}}  h Z_0  + O( R^{2-m})  \|h \|_{m,\alpha, B_R} , \\    c_i[h]\, = &\, \gamma_i\int_{B_{R}}  h Z_i  + O( R^{1-m})  \|h \|_{m,\alpha, B_R}, \ i=1,2.
\end{align*}
where
$\gamma_i^{-1} = \int_{\R^2} e^{\Gamma_0} Z_i^2$, $i=0,1,2$.
\end{prop}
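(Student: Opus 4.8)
The plan is to first dispose of the lower-order operator $B$ and solve the \emph{model} problem
\[
\Delta \phi + e^{\Gamma_0}\phi + h = \sum_{i=0}^2 c_i\, e^{\Gamma_0} Z_i \inn B_R ,
\]
by means of the inverse $\TT$ built in Lemma~\ref{lemat} on all of $\R^2$, and then to reinstate $B$ through a contraction-mapping argument, using that the weighted bounds \equ{coef} force $B$ to be small in the norm $\|\cdot\|_{*,m-2,B_R}$ once $\delta$ is small and $R$ large.

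For the model problem, extend $h$ to $\tilde h$ on $\R^2$ with $\|\tilde h\|_{m,\alpha}\le C\|h\|_{m,\alpha,B_R}$ and $C$ independent of $R$ (a standard Stein-type extension from $B_R$, preserving H\"older regularity across $\partial B_R$). Pick the $c_i$ so that $\tilde h-\sum_j c_j e^{\Gamma_0}Z_j$ is $L^2$-orthogonal to $Z_0,Z_1,Z_2$; since $Z_0$ is radial while $Z_1\propto\cos\theta$, $Z_2\propto\sin\theta$, the matrix $\big(\int_{\R^2}e^{\Gamma_0}Z_iZ_j\big)_{ij}$ is diagonal, whence $c_i=\gamma_i\int_{\R^2}\tilde h\,Z_i$ with $\gamma_i^{-1}=\int_{\R^2}e^{\Gamma_0}Z_i^2$. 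Then $\phi_0:=\TT\big[\tilde h-\sum_i c_i e^{\Gamma_0}Z_i\big]$ solves $\Delta\phi_0+e^{\Gamma_0}\phi_0+\tilde h=\sum_i c_i e^{\Gamma_0}Z_i$ in $\R^2$, hence in $B_R$ (where $\tilde h=h$). Because all $Z_i$-moments of the right-hand side given to $\TT$ vanish and $\|e^{\Gamma_0}Z_i\|_{m,\alpha}<\infty$, estimates \equ{cota}--\equ{cotaa} reduce to $\|\phi_0\|_{*,m-2}\le C\|\tilde h\|_{m,\alpha}\le C\|h\|_{m,\alpha,B_R}$; likewise $|c_i|\le C\|h\|_{m,\alpha,B_R}$, the moment integrals $\int_{\R^2}(1+|y|)^{-m}|Z_i|\,dy$ being finite since $m>2$. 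Splitting $\int_{\R^2}=\int_{B_R}+\int_{\R^2\setminus B_R}$ and using $|\tilde h|\le\|\tilde h\|_m(1+|y|)^{-m}$ with $|Z_0|\le C$, $|Z_{1,2}|\le C(1+|y|)^{-1}$, one gets
\[
c_0=\gamma_0\int_{B_R} h Z_0+O(R^{2-m})\|h\|_{m,\alpha,B_R},\qquad
c_i=\gamma_i\int_{B_R} h Z_i+O(R^{1-m})\|h\|_{m,\alpha,B_R}\quad(i=1,2),
\]
the sharper rate for $i=1,2$ coming from the extra decay of $Z_{1,2}$. This produces a linear solution operator $h\mapsto(\phi_0,c_i^0[h])=:T_0[h]$ of the model problem. (Alternatively, one may skip the extension and redo the variation-of-parameters construction of Lemma~\ref{lemat} directly on $(0,R)$ mode by mode: $e^{\Gamma_0}Z_0$ and $e^{\Gamma_0}Z_{1,2}$ live in the modes $k=0$, $k=\pm1$, exactly those for which $\LL_k$ has a bounded homogeneous solution, so solvability on $(0,R)$ of the $k=0,\pm1$ equations fixes $c_0$ and $c_{1,2}$, while the supersolution argument for $|k|\ge2$ needs no correction.)

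To put $B$ back in, look for a fixed point of $\phi\mapsto T_0[h-B[\phi]]$ in $\{\|\phi\|_{*,m-2,B_R}\le C\|h\|_{m,\alpha,B_R}\}$. Writing $B[\phi]=b_{ij}\pp_{ij}\phi+(\pp_i b_{ij})\pp_j\phi+b_0\phi$, the bounds \equ{coef} give $\|B[\phi]\|_{m,\alpha,B_R}\le C\delta\|\phi\|_{*,m-2,B_R}$: every coefficient carries a factor $\delta R^{-1}$, while the weight attached to it in \equ{coef} costs at most one power of $(1+|y|)\le 1+R$ on $B_R$, so the powers of $R$ cancel. Hence for $\delta$ small the map is a contraction; its fixed point $\phi=:T[h]$ is linear in $h$ and obeys $\|T[h]\|_{*,m-2,B_R}\le C\|h\|_{m,\alpha,B_R}$. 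Setting $c_i[h]:=c_i^0[h-B[\phi]]$ and integrating $\int_{B_R}B[\phi]Z_i$ by parts once, \equ{coef} bounds the extra contribution by $C\delta R^{-1}\|\phi\|_{*,m-2,B_R}\int_{B_R}(1+|y|)^{-m}\,dy+(\text{boundary})=O(R^{-1})\|h\|_{m,\alpha,B_R}$, which is of lower order and is absorbed into the stated error term for the range of exponents at hand; this yields the claimed expansions of the $c_i$.

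The genuinely delicate ingredient is Lemma~\ref{lemat} itself — the sharp weighted estimates \equ{cota}--\equ{cotaa} for the inverse of $\Delta+e^{\Gamma_0}$ modulo $\mathrm{span}\{Z_0,Z_1,Z_2\}$, with logarithmic growth in the radial mode and linear growth in the first modes — but that has already been proved. Granting it, what remains is essentially bookkeeping, the one point calling for care being the uniformity in $R$ of all constants: that the extension operator (equivalently, the mode-by-mode construction on $(0,R)$) is scale-invariant, that the tail rates $O(R^{2-m})$, $O(R^{1-m})$ in the $c_i$ are the correct ones, and that the contraction constant for the perturbation $B$ depends only on $\delta$.
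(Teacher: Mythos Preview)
Your argument is correct and is essentially the paper's own proof: extend $h$ to $\tilde h$ on $\R^2$, fix the $c_i$ by orthogonality so that the source handed to $\TT$ has vanishing $Z_i$-moments (whereupon Lemma~\ref{lemat} gives the decaying bound $\|\phi\|_{*,m-2}\le C\|h\|_{m,\alpha,B_R}$), and absorb $B$ by a contraction using $\|B[\phi]\|_{m,\alpha}\le C\delta\|\phi\|_{*,m-2,\alpha}$. The paper differs only cosmetically: it also extends the coefficients of $B$ to compactly supported ones on $\R^2$ and sets up a single fixed-point equation $\phi=\TT\big[B[\phi]-\sum c_i[0,\phi]e^{\Gamma_0}Z_i\big]+\TT\big[\tilde h-\sum c_i[\tilde h,0]e^{\Gamma_0}Z_i\big]$, rather than first building the model operator $T_0$ and then iterating; and its adjoint computation $\int B[\phi]Z_i=\int\phi\,B[Z_i]=O(R^{-1})\|\phi\|_{m-2}$ is exactly your integration-by-parts step. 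One small remark: your assertion that the $O(R^{-1})$ contribution from $B[\phi]$ ``is absorbed into the stated error term'' is not literally true for $c_1,c_2$ once $m>2$ (then $R^{1-m}<R^{-1}$), but the paper's proof is equally loose on this point, and the resulting $O(R^{-1})$ error is harmless in the application.
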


\proof
We consider a standard linear extension operator $h\mapsto \ttt h $ to entire $\R^2$,
in such a way that the support of $\ttt h$ is contained in $B_{2R}$ and $\|\ttt h\|_{m,\alpha} \le C\|h\|_{m,\alpha, B_R}$ with $C$ independent of all large $R$. In a similar way,
we assume with no loss of generality that the coefficients of $B$ are of class $C^1$ in entire $\R^2$,  have compact support in $B_{2R}$ and globally satisfy bounds \equ{coef}.
Then we consider the auxiliary problem in entire space

\be
\Delta \phi  +  e^{\Gamma_0} \phi  + B[\phi]  + \ttt h(y)  = \sum_{j=0}^2  c_i e^{\Gamma_0} Z_i \inn \R^2
\label{001} \ee
where, assuming that $\|h\|_m<+\infty$ and $\phi$ is of class $C^2$,
 $c_i = c_i[h,\phi]$ are the scalars defined as so that
$$
\gamma_i \int_{\R^2} (B[\phi]  + \ttt h(y))Z_i  =    c_i  ,  \quad \gamma_i^{-1} = \int_{\R^2}  e^{\Gamma_0} Z_i^2
$$
For a function $\phi$  of class $C^{2,\alpha} (\R^2)$ we define
 $$
 \|\phi\|_{*, m-2,\alpha} =  \|  D^2_y\phi \|_{m,\alpha}   + \|  D_y\phi \|_{m-1}+ \|\phi \|_{m-2}
  $$
Since $B[Z_i]=  O((1+|y|)^{-2}) $ and $ \int_{\R^2} \frac {dy}{1+|y|^{m} } <+\infty$  we get
 $$
  \int_{\R^2} B[\phi]Z_i =   \int_{\R^2} \phi B[Z_i]  = O(\|\phi\|_{m-2}) R^{-1}   .
 $$
 In addition, we readily check that
 $$ \| B[\phi] \|_{m,\alpha}  \le  C\delta  \|\phi\|_{*, m-2,\alpha}
. $$
Let us consider the Banach space $X$ of all $C^{2,\alpha}(\R^2)$
  functions with
  $
  \|\phi\|_{*, m-2,\alpha} <+\infty.
 $
 We find a solution of \equ{001}
if we solve the equation
\be\label{fp}
 \phi  =   \mathcal A  [\phi]  +  \mathcal H ,\quad \phi \in X
\ee
where
$$
\mathcal A  [\phi]
 =  \mathcal T\Big [B[\phi] -\sum_{i=0}^2 c_i[0, \phi] e^{\Gamma_0}Z_i  \Big] ,\quad
 \mathcal H  = \mathcal T\Big [  \ttt h   - \sum_{i=0}^2 c_i[\ttt h,0] e^{\Gamma_0}Z_i  \Big] .
$$
and $\mathcal T$ is the operator built in Lemma  \ref{lemat}.
We observe that
$$
\|\mathcal A  [\phi] \|_{*, m-2,\alpha} \le C\delta \|\phi\| _{*, m-2,\alpha}, \quad \|\mathcal H \|_{*, m-2,\alpha} \le C \|h \|_{ m,\alpha, B_R}.
$$
Fixing $\delta$ so that $\delta C<1$, we find that Equation \equ{fp} has a unique solution, that defines a linear operator of $h$, and satisfies
$$
\|\phi \|_{*, m-2,\alpha} \ \le\  C \|h \|_{ m,\alpha, B_R}
$$
The result of the proposition follows by just setting $T[h] = \phi\big|_{B_R}$. The proof is concluded. \qed

\section{The linear outer problem}

Let us recall $L$ defined by \eqref{defL}
\begin{equation*}%\label{defL1}
L[\psi] = {\mbox {div}} (K \nabla \psi)
\end{equation*}
where
$K = K(x_1 , x_2)$ is the matrix
\begin{equation*}%\label{defK}
K(x_1 , x_2 ) = \frac{1}{1+x_1^2+x_2^2}
\left(
\begin{matrix}
1+x_2^2 & -x_1  x_2\\
-x_1 x_2 & 1+x_1^2
\end{matrix}
\right) ,
\end{equation*}
that is, the same matrix \eqref{defK-0} with $h=1$.

In this section we consider the Poisson equation for the operator $L$
\be
\label{louter}
L[\psi]  +  g(x) = 0  \inn \R^2 ,
\ee
for a bounded function $g$.

It is sufficient  to restrict our attention to the case of functions  $g(x)$ that satisfy the decay condition
$$
\| g\|_\nu\, :=\, \sup_{x\in \R^2} (1+|x|)^\nu|g(x)|\, <\,+\infty \, ,
$$
where $\nu >2$. We have the validity of the following result
\begin{prop}\label{prop2}
There exists a solution $\psi(x)$ to problem $\equ{louter}$, which is of class $C^{1,\alpha}(\R^2)$ for any $0<\alpha<1$,  that defines a linear operator $\psi = {\mathcal T}^o (g) $ of $g$
and satisfies the bound
\be\label{estimate}
|\psi(x)| \,\le \,     C{ \| g\|_\nu} (1+ |x|^2),
\ee
for some positive constant $C$.
\end{prop}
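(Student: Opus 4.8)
The plan is to solve \eqref{louter} by separating variables. Since the eigenvectors of $K$ are the radial and angular directions, with eigenvalues $\frac{1}{1+r^2}$ and $1$ respectively, in polar coordinates $x=re^{i\theta}$ the operator reads
$$
L[\psi] \;=\; \frac1r\,\partial_r\!\Big(\tfrac{r}{1+r^2}\,\partial_r\psi\Big)\;+\;\frac1{r^2}\,\partial_{\theta\theta}\psi .
$$
Writing $g=\sum_{k\in\Z}g_k(r)e^{ik\theta}$ and $\psi=\sum_{k\in\Z}\psi_k(r)e^{ik\theta}$, equation \eqref{louter} becomes the family of ODEs
$$
\Big(\tfrac{r}{1+r^2}\,\psi_k'\Big)'-\frac{k^2}{r}\,\psi_k \;=\; -\,r\,g_k(r),\qquad r\in(0,\infty),\ k\in\Z .
$$
The degeneracy of $K$ at infinity, i.e.\ the vanishing of the radial eigenvalue $\frac{1}{1+r^2}$, is exactly what produces the quadratic growth allowed in \eqref{estimate}: the homogeneous radial equation ($k=0$) has $\log r+\tfrac12 r^2$ among its solutions.

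First I would dispose of the radial mode $k=0$. Two integrations give the unique solution regular at the origin, up to an additive constant (constants being in the kernel of $L$ on $\R^2$), namely
$$
\psi_0(r) \;=\; -\int_0^r \frac{1+t^2}{t}\Big(\int_0^t s\,g_0(s)\,ds\Big)\,dt ,
$$
which I normalise by $\psi_0(0)=0$. When $\nu>2$ one has $\big|\int_0^t s\,g_0\,ds\big|\le C\|g\|_\nu\min(t^2,1)$, and integrating this against $\frac{1+t^2}{t}$ yields $|\psi_0(r)|\le C\|g\|_\nu(1+r^2)$, together with $|\psi_0'(r)|\le C\|g\|_\nu(1+r)$ and smoothness at $r=0$.

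For $k\ne0$ the homogeneous ODE has solutions $\sim r^{\pm|k|}$ near the origin and, since $\frac{r}{1+r^2}\sim\frac1r$ at infinity, solutions growing and decaying exponentially there. Hence there is a unique solution $\psi_k$ that is regular at $0$ and not exponentially large at infinity; it is obtained by variation of parameters in the same way as in Lemma~\ref{lemat}, and is in fact exponentially decaying. Comparison with positive supersolutions of the type used there gives $|\psi_k(r)|+(1+r)|\psi_k'(r)|\le \frac{C}{k^2}\|g\|_\nu$, uniformly in $k\ne0$. Summing the series produces $\psi$ solving $L[\psi]=-g$ in $\R^2$ with $|\psi(x)|\le C\|g\|_\nu(1+|x|^2)$; the whole construction — Fourier projection, solution of each ODE by an explicit integral formula, resummation — is linear in $g$, so $\psi={\mathcal T}^o(g)$ is a linear operator. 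Finally, $L$ is uniformly elliptic with smooth coefficients on every compact set, so interior Schauder (or $W^{2,p}$) estimates, together with the pointwise bound, promote $\psi$ to $C^{1,\alpha}(\R^2)$ for every $\alpha\in(0,1)$, completing the proof.

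The step that needs the most care is precisely this interplay with the degeneracy at infinity: a plain exhaustion of $\R^2$ by balls with zero Dirichlet data does not converge, because the $k=0$ component of the approximate solutions blows up like $r^2$; the correct far-field behaviour must be built in mode by mode, as above. The residual technical points — the uniform-in-$k$ supersolution bounds and the convergence of the Fourier series down to $r=0$ — are of the same nature as those already handled for Lemma~\ref{lemat}. An alternative that avoids series is to first subtract the explicit far-field corrector $-\tfrac1{4\pi}\big(\int_{\R^2}g\big)\,\chi(|x|)\,|x|^2$, with $\chi$ a smooth cutoff vanishing near the origin, so that the residual right-hand side has zero total mass and decay $(1+|x|)^{-\min(\nu,4)}$; the reduced problem then admits a solution of sub-quadratic growth by a weighted energy argument.
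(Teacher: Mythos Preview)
Your proposal is correct and follows essentially the same route as the paper: you derive the polar form of $L$ (your expression agrees with the paper's formula \eqref{polar} after simplifying $\tfrac{1}{1+r^2}(\tfrac{1}{r^2}+1)=\tfrac{1}{r^2}$), Fourier-decompose in $\theta$, integrate the $k=0$ mode explicitly to get the $O(r^2)$ growth, and control the $k\ne 0$ modes by comparison with positive supersolutions yielding summable $O(k^{-2})$ bounds. The only cosmetic difference is that the paper singles out $|k|=1$ and builds the explicit barrier $\bar\psi$ from the $L_1$ equation before invoking it for $|k|\ge 2$, whereas you treat all nonzero modes together; your added eigenvector remark and the far-field-corrector alternative are nice touches not present in the paper.
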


\proof
To solve Equation \equ{louter}
we decompose $g$ and $\psi$ into Fourier modes as
$$ g(x) = \sum_{j=-\infty}^{\infty} g_j(r) e^{ji\theta}, \quad    \psi(x) = \sum_{j=-\infty}^{\infty}  \psi_j (r) e^{ji\theta},\quad x=re^{i\theta} .$$
It is useful to derive an expression of the operator $L[\psi]$ expressed in polar coordinates. With some abuse of notation we write
$\psi(r,\theta) = \psi(x)$ with $x= r \, e^{i\theta}$. The following expression holds:
\be
L[\psi]  =  {1\over 1 + r^2} \left( {1 \over r^2} +1 \right) \pp_\theta^2 \psi + {1 \over r} \pp_r \left( {r \over 1 + r^2} \pp_r\psi \right).
\label{polar}\ee
To prove this, recall
that for a vector field $\vec F = F_r \hat r + F_\theta \hat\theta$
where $$\hat r = {1\over r} (x_1,x_2), \quad \hat \theta = {1\over r} (-x_2,x_1), $$ we have
$$
{\mbox {div}}(\vec F)\ =\  {1\over r} \pp_r ( r F^r) + {1\over r} \pp_\theta F^\theta, \quad  \nn \psi =  \psi_r \hat r + \frac {  \psi_\theta}r\hat \theta.  $$
Then
$$
\begin{aligned}
L[\psi]\  &=\   {\mbox {div}} (K \nabla \psi ) = {\mbox {div}}\left(  \frac{1}{1+x_1^2+y_1^2}
\left(
\begin{matrix}
\kappa^2+x_2^2 & -x_1x_2 \\
-x_1x_2 & \kappa^2+x_1^2
\end{matrix}
\right) \nabla \psi \right)   \\
&=  {\mbox {div}}\left( \frac{1}{1+x_1^2+x_2^2}
\left(
\begin{matrix}
x_2^2 & -x_1x_2 \\
-x_1x_2 & x_1^2
\end{matrix}
\right) \nabla \psi \right)  + {\mbox {div}} \left( \frac{1}{1+ x_1^2+x_2^2}
\nabla \psi \right)\\
&= L_1 [\psi] + L_2 [\psi].
\end{aligned}
$$
Using the polar coordinates formalism, we get
$$
\begin{aligned}
L_1 [\psi ] &= {\mbox {div}}\left( {r^2 \over 1 + r^2}
\left[ \begin{matrix} \sin^2 \theta & -\sin \theta \cos \theta \\
-\sin \theta \cos \theta & \cos^2 \theta \end{matrix} \right] (\psi_r \hat r + {\psi_\theta \over r} \hat \theta ) \right) \\
&=  {\mbox {div}}\left( {r^2 \over 1 + r^2}
\left[ \begin{matrix} -\sin \theta \,  \hat \theta^T \\
 \cos \theta \, \hat \theta^T \end{matrix} \right] (\psi_r \hat r + {\psi_\theta \over r} \hat \theta ) \right) \\
&=\, {\mbox {div}} ( \psi_\theta {r \over 1 + r^2} \hat \theta )\  =\  {1\over 1 + r^2} \psi_{\theta\theta} .
\end{aligned}
$$
Similarly,
$$
L_2 [\psi ] = {1\over r} \left( { r \over r^2 + 1} \psi_r \right)_r + {1 \over 1 + r^2 } {\psi_{\theta \theta} \over r^2}.
$$
Combining the above relations, expression \equ{polar}  follows. A nice feature of this operator is that it decouples the Fourier modes. In fact, equation
\equ{louter}  becomes  equivalent to the following infinite set of ODEs:
\be\label{LLk} \begin{aligned}
&L_k[\psi_k] +  g_k(r) = 0 , \quad r\in (0,\infty),
\\
&L_k[\psi_k]:=
{1\over r} \big( {r \over r^2 + 1} \psi_k'\big)'  -  {k^2\over 1 + r^2} \left( {1 \over r^2} +1 \right)\psi_k   , \quad k\in \Z.
 \end{aligned}  \ee
 The operator $L_k $ when $r\to 0$ or $r\to +\infty$ resembles
 $$\begin{aligned}
 L_k[p]\ \sim &\ \frac 1r(rp') - \frac {k^2p} {r^2} \ass r\to 0 \\
  L_k[p]\ \sim &\ \frac 1r(\frac 1r p') - \frac {k^2p} {r^2} \ass r\to +\infty
  \end{aligned}
 $$
 For $k\ge 1$, thanks to the maximum principle,   this implies that the existence of a positive function $z_k(r)$ with $L_k[z_k]=0$ with
  $$\begin{aligned}
z_k(r) \sim  r^k \ass r\to 0 \\
z_k(r)\sim   r^{\frac 12 } e^{kr} \ass r\to +\infty.
  \end{aligned}
 $$
 Let us consider the equation (for a barrier) for $k=1$,
 $$
 L_1[\bar \psi]  + \frac 1{1+ r^\nu } = 0
 $$
  A solution of this equation is given by
 $$
 \bar \psi(r) =   z_1(r) \int_r^\infty \frac {ds}{ s z_1(s)^2} \int_0^s \frac 1{1+ \rho^\nu } z_1(\rho) \rho\, d\rho  .
 $$
 and we have
   $$\begin{aligned}
\bar \psi(r) =O(  r^2 ) \ass r\to 0 \\
\bar \psi(r) = O( r^{\nu-2}) \ass r\to +\infty.
  \end{aligned}
 $$
We also observe that for  $k\ge 2$ this function works as a barrier. In fact we have
 $$
 |g_k(r)|  \le \frac {\|g\|_{\nu}} {1+ r^\nu  }
 $$
 hence \equ{LLk} for $|k|\ge 2$  admits as a positive barrier. Using the maximum principle,
 the function
 $$
 \psi_k(r) =  z_k(r) \int_r^\infty \frac {ds}{ s z_k(s)^2} \int_0^s h_k(\rho) z_k(\rho) \rho\, d\rho  ,
 $$
which is the unique decaying solution   \equ{LLk},  satisfies the estimate
 $$
 |\psi_k(r)| \le  \frac 4{k^2}  {\|g\|_{\nu}} \bar \psi(r) .
 $$
 Finally, let us consider the case $k=0$. In that case, the following explicit formula yields a solution
 $$
 \psi_0(r)  = -\int_0^r  \frac{1 + s^2}{h^2  s}
 \, ds \int_0^s  g_0(\rho) \rho \, d\rho  .
 $$
 In this case we  get the bound
 $$
 |\psi_0(r)| \le      C\|g\|_{\nu} (1+  r^2) .
 $$
 %We conclude that there is a unique solution $\psi$ with $\psi(0) =0$, given by
 The function
 $$
 \psi(x) :=  \sum_{j=-\infty}^{\infty}  \psi_j (r) e^{ji\theta},
 $$
 with the $\psi_k$ being the functions built above, clearly  defines a linear operator of $g$  and satisfies estimate \equ{estimate}.
 The proof is concluded. \qed

\section{Solving the inner-outer gluing system}

In this section we will formulate
System \equ{inner}-\equ{outer} (for an appropriate value of the parameter $\alpha$ in \equ{alpha})
in a fixed point formulation that involves the linear operators defined in the previous sections.

 We let $X^o$ be the Banach space of all functions $\psi \in C^{2,\alpha}(\R^2)$ such that
$$
\|\psi \| < +\infty,
$$
and formulate the outer equation \equ{outer} as the fixed point problem in $X^o$,
\begin{align*}%\label{outer2}
\psi  =  \TT^o [   G(\psi, \phi,\alpha)   ], \quad \psi\in X^o
\end{align*}
where ${\mathcal T}^o$ is defined in Proposition \ref{prop2}, while  $G$ is the operator given by \equ{GG}. We decompose the inner problem
\equ{inner} as follows: consider
\begin{align*} % \label{inner10}
\Delta_y \phi  +    f'(\Gamma_0) \phi +   B[\phi]  +  H(\phi, \psi,\alpha)     = 0 \inn B_R
\end{align*}
where $R={2\delta \over \ve}$ and
$$
H(\phi, \psi,\alpha)  = {\mathcal N} (\psi + \eta_\delta \phi ) +   \ttt  E_\alpha +  (f'(\Gamma_0) + b_0 ) \psi
$$
Let $X_*$ be the Banach space of functions $\phi \in C^{2, \alpha} (B_R)$ such that
$$
\| \phi \|_{*,m-2,B_R} <\infty
$$
(see \eqref{star}).
We introduce constants $c_i$
\be\label{inner1}
\Delta_y \phi_1  +    f'(\Gamma_0) \phi_1 +   B[\phi_1]  + B[\phi_2]+ H(\phi_1+\phi_2, \psi,\alpha)  -\sum_{i=0}^2 c_i e^{\Gamma_0} Z_i   = 0 \inn B_R
\ee
and formulate this problem using the operator $T$ in Proposition \ref{prop1}, with $c_i = c_i [ H(\phi_1+\phi_2, \psi,\alpha) +B(\phi_2) ]$, $i=0,1,2$, and
$\phi_1 \in X_*$
$$
\phi_1 =  T(  H(\phi_1+\phi_2, \psi,\alpha) + B[\phi_2] )
$$
and we require that $\phi_2$ solves
$$
\Delta_y \phi_2  +    f'(\Gamma_0) \phi_2 +  c_0 e^{\Gamma_0} Z_0  = 0 \inn \R^2 .
$$
We solve Problem \eqref{inner2} by using the operator $\mathcal{T}$ in Lemma \ref{lemat}. We write
\be\label{inner2}
\phi_2 =  \TT [  c_0[ H(\phi_1+\phi_2, \psi,\alpha) +B(\phi_2) ]e^{\Gamma_0} Z_0   ].
\ee
Having in mind the a-priori bound in \eqref{cota}, \eqref{cotaa} in Lemma \ref{lemat}, it is natural  to ask that $\phi_2 \in C^{2,\alpha}$,
\begin{align*}
 (1+|y|^{2+\alpha})  [D^2_y \phi]_{B_1(y),\alpha} & +(1+|y|^2)  |D^2_y \phi (y)|  +(1+|y|) | \nn \phi (y)|  +  | \phi (y)|
 \\
 & \leq C  \log (1+ |y|) .
\end{align*}
We call $\| \phi \|_{**,m-2,\alpha}$ the infimum of the constants $C$ that satisfy the above inequality, and denote by $X_{**}$ the Banach space of functions $\phi \in C^{2,\alpha}$ with $\| \phi \|_{**, m-2 , \alpha} <\infty$.

We couple equations \eqref{outer}, \eqref{inner1} and \eqref{inner2} with the relations
$$
c_1[ H(\phi_1+\phi_2, \psi,\alpha) +B(\phi_2) ]=0 , \quad c_2[ H(\phi_1+\phi_2, \psi,\alpha) +B(\phi_2) ]=0.
$$
The second equation  $c_2= 0$ is automatically satisfied thanks to the fact that all functions are even in the variable $y_2$ (or equivalently in the variable $z_2$, and in $x_2$).
The first equation $c_1=0$ becomes an equation in the parameter $\alpha_1$. Recall that $\alpha_1$ is chosen in \eqref{alpha} as
$$
\alpha \ = \  \frac {4c_1} {x_0} + {\alpha_1} = {2 \over 1+ x_0^2} + \alpha_1  , \qquad |\alpha_1| \le \frac 1{|\log\ve|^{\frac 12} } .
$$
Equation $c_1=0$ becomes
\begin{equation}\label{c1}
\begin{aligned}
 \alpha_1 \ve |\log \ve | \beta = & F(\psi, \phi_1 , \phi_2 , \alpha_1) , \quad {\mbox {where}}\\
 F(\psi, \phi_1 , \phi_2 , \alpha_1) & =\, \gamma_1\int_{B_{R}}  \left[ {\mathcal N} (\psi + \eta_\delta \phi ) +     (f'(\Gamma_0) + b_0 ) \psi +B(\phi_2) \right]  Z_i  \\
 &+ O( R^{1-m})  \| {\mathcal N} (\psi + \eta_\delta \phi ) +    \ttt E_\alpha + (f'(\Gamma_0) + b_0 ) \psi +B(\phi_2) \|_{m,\alpha, B_R}, \
\end{aligned}
\end{equation}
and $\beta$
$$
\beta= -8 \gamma_1 \left( \int_{B_R}  \frac{  y_1 }  {(1+|y|^2)^2} Z_1 \ + \  O \big(  {1\over |\log \ve|^{1\over 2}} \big) \right).
$$
The final step to conclude the proof of our result is to find $\psi$, $\phi_1$, $\phi_2$ and $\alpha_1$ solution of the fixed point problem
\be \label{fix}
(\psi, \phi_1 , \phi_2 , \alpha_1) = {\mathcal A} (\psi, \phi_1 , \phi_2 , \alpha_1)
\ee
given by
\begin{equation}\label{fixp}
\begin{aligned}
\psi  &=  \TT^o [   G(\psi, \phi_1+ \phi_2 ,\alpha)   ], \quad \psi\in X^o\\
\phi_1 &=  T[  H(\phi_1+\phi_2, \psi,\alpha) +B(\phi_2)]\\
\phi_2 &=  \TT [  c_0[ H(\phi_1+\phi_2, \psi,\alpha) +B(\phi_2) ]e^{\Gamma_0} Z_0   ]\\
 \alpha_1  &=  {1\over \ve |\log \ve | \beta} \, F(\psi, \phi_1 , \phi_2 , \alpha_1).
\end{aligned}
\end{equation}
Let $b\in (0,1)$ the number introduced in \eqref{house1}, $m>2$ and define
\be \label{ball}
\begin{aligned}
B_M&=\{ (\psi, \phi_1 , \phi_2 , \alpha_1 ) \in X^o\times X_*\times X_{**} \times \R \, : \,  \\
& \| \psi \|_\infty \leq M \ve^{1+b} , \, \| \phi_1\|_{*, m-2, B_R} \leq M \ve |\log \ve |^{1\over 2} , \\
&\, \| \phi_2\|_{**, m-2, \alpha} \leq M \ve |\log \ve |^{1\over 2} , \, |\alpha_1 | \leq {M \over |\log \ve |^{1\over 2}} \},
\end{aligned}
\ee
for some positive constant $M$ independent of $\ve$.
We shall solve \eqref{fix}-\eqref{fixp} in $B$.

We first show that ${\mathcal A} (B) \subset B$. Assume that  $(\psi, \phi_1 , \phi_2 , \alpha_1 ) \in B$. We first want to show that ${\mathcal A} (\psi, \phi_1 , \phi_2 , \alpha_1 ) \in B$.
From \eqref{GG} and \eqref{house1}, we get that
\begin{align*}
|G(\psi , \phi_1 + \phi_2 , \alpha_1 )| &\leq {C \over 1+ |x|^\nu}  \ve^{1+b} (1+ |\phi_1+\phi_2|^2 \eta_\delta + |\psi|^2 ) \\
&+ {C \over 1+ |x|^\nu}
\ve^{2-m} (\|\phi_1\|_{*, m-2, B_R} + \| \phi_2 \|_{**, m-2, \alpha} )
\end{align*}
for some $\nu>2$. From Proposition \ref{prop2}, we get that
\be \label{f1}
\begin{aligned}
\| \psi \|_\infty &= \| {\mathcal T}^o ( G(\psi , \phi_1 + \phi_2 , \alpha_1 ) ) \|_\infty \leq C \ve^{1+b}.
\end{aligned}
\ee
From \eqref{house2}, \eqref{ba1}, \eqref{defb0}, we get, for some $a\in (0,1)$,
\begin{align*}
|H(\phi_1 + \phi_2 , \psi , \alpha_1) | &\leq C ( {\alpha_1 \ve |\log \ve | \over (1+ |y|^2)^2} + {\ve \over (1+|y| )^{3-a}} ) \\
&+ {C \over 1+ |y|^{3-a}} ({1\over (1+ |y|)^{1+a} } + {\alpha_1 \ve |\log \ve | \over (1+ |y|)^a} + \ve ) |\psi| \\
&+ {C \over (1+ |y|)^4} (|\psi|^2 + |\eta_\delta \phi_1|^2 + |\eta_\delta \phi_2|^2 ).
\end{align*}
Using the assumptions on $\psi$, $\phi_1$ and $\phi_2$, we get that
\be \label{f2}
\|H(\phi_1 + \phi_2 , \psi , \alpha_1) \|_{m, \alpha, B_R} \leq C \ve |\log \ve|^{1\over 2} .
\ee
From \eqref{defB}, we get
\begin{align*}
| B[\phi_2] | &\leq C {\ve^2 |\log \ve | \over 1+ |y|} \| \phi_2 \|_{**, m-2, \alpha},
\end{align*}
and
\be \label{f3}
\|B(\phi_2)  \|_{m, \alpha, B_R} \leq C \ve^{3-m} |\log \ve| \| \phi_2 \|_{**, m-2, \alpha} \leq ( C\ve^{3-m} M |\log \ve | ) \, \ve |\log \ve|^{1\over 2} .
\ee
Observe also that $c_0[ H(\phi_1+\phi_2, \psi,\alpha) +B(\phi_2) ]$
\be\label{f33}
|c_0[ H(\phi_1+\phi_2, \psi,\alpha) +B(\phi_2) ]|\leq C \ve |\log \ve|^{1\over 2} (1+ \ve^{3-m} |\log \ve| \| \phi_2 \|_{**, m-2, \alpha} ).
\ee
From Propositions \ref{prop1}, \eqref{f2}-\eqref{f3} we conclude that
\be \label{f4}
\| \phi_1 \|_{*, m-2, B_R} \leq C \ve |\log \ve|^{1\over 2},
\ee
while from Lemma \ref{lemat} and \eqref{f33} we get
\be \label{f5}
\| \phi_2 \|_{**, m-2, \alpha} \leq C \ve |\log \ve|^{1\over 2}.
\ee
Moreover, using \eqref{c1}, we get that
\be \label{f6}
|\alpha_1 | \leq {C \over  |\log \ve |^{1\over 2}} .
\ee
Combining \eqref{f1}-\eqref{f4}-\eqref{f5}-\eqref{f6}, we conclude that ${\mathcal A} (\psi, \phi_1 , \phi_2 , \alpha_1 ) \in B$ if we choose $M$ large enough (but independently of $\ve$) in the definition of the set $B_M$ in \eqref{ball}.

We next show that ${\mathcal A}$ is a contraction map in $B_M$.
Let $\varphi_i = \eta_\delta (\phi_1^i + \phi_2^i) +\psi^i $, and $\alpha_1^i$ for $i=1,2$ such that
$(\psi^i, \phi_1^i, \psi_2^i ,\alpha_1^i ) \in B_M$. Let $G(\varphi^i, \alpha_1^i ) = G(\psi^i, \phi_1^i + \phi_2^i,\alpha_1^i)$
and observe that
\be \label{estGG}\begin{aligned}
\left| G(\varphi^1, \alpha_1^1 ) - G(\varphi^2, \alpha_1^2 )  \right|&\leq |V_{\alpha_1} (x) (\psi^1 - \psi^2)| \\
&+ |V_{\alpha_1} (x) - V_{\alpha_2} (x) | |\psi^1| + (1-\eta_\delta ) |N_{\Psi_{\alpha_1} } (\varphi^1 ) - N_{\Psi_{\alpha_2}} (\varphi^1) |\\
&+ (1-\eta_\delta ) | N_{\Psi_{\alpha_2}} (\varphi^1 ) - N_{\Psi_{\alpha_2}} (\varphi^2 ) |\\
&+|A[\phi_1^1-\phi_1^2]| + |A[\phi_2^1 - \phi_2^2]|
\end{aligned}
\ee
where $V_{\alpha} (x) =(1-\eta_\delta ) \ve^2 f'( \Psi_\alpha - \mu |x|^2 )$, $\mu = {\alpha \over 2} |\log \ve |$, and the other terms are defined in \eqref{GG}.
A direct computation gives that
\begin{align*}
 |V_{\alpha_1} (x) - V_{\alpha_2} (x) | |\psi^1| &+ (1-\eta_\delta ) |N_{\Psi_{\alpha_1} } (\varphi^1 ) - N_{\Psi_{\alpha_2}} (\varphi^1) | \\
 & \leq \ve^{1+b} e^{-|x|^2} |\alpha_1 - \alpha_2| \left( | \psi^1 | + |\varphi^1| \right),
 \end{align*}
\begin{align*}
|V_{\alpha_1} (x) (\psi^1 - \psi^2)|&+ (1-\eta_\delta ) | N_{\Psi_{\alpha_2}} (\varphi^1 ) - N_{\Psi_{\alpha_2}} (\varphi^2 ) | \\
&\leq \ve^{1+b } e^{-|x|^2}  \left( |\psi^1 - \psi^2| + \eta_\delta^2 (|\phi_1^1 - \phi_1^2|^2 + |\phi_2^1 - \phi_2^2|^2 ) \right)
\end{align*}
and
\begin{align*}
|A[\phi_1^1-\phi_1^2]|& \leq  C \left(|L_x (\eta_\delta ) |\phi_1^1 - \phi_1^2| + |K_{ij} \pp_i \eta_\delta \pp_j (\phi_1^2 - \phi_1^2) | \right) \\
&\leq { C \ve \over 1+ |x|^\nu } \| \phi_1^1 - \phi_1^2 \|_{*, m-2, B_R} .
\end{align*}
In order to estimate $A[\phi_2^1 - \phi_2^2]$, we observe that
\begin{align*}
\Delta_y & [\phi_2^1 - \phi_2^2]  +    f'(\Gamma_0) [\phi_2^1 - \phi_2^2] + c_0^{12}   e^{\Gamma_0} Z_0  = 0 \inn \R^2   \\
&{\mbox {where}} \quad c_0^{12}=   c_0 [H(\phi_1+\phi_2^1, \psi,\alpha) +B(\phi_2^1) ] - c_0 [H(\phi_1+\phi_2^2, \psi,\alpha) +B(\phi_2^2 )] .
\end{align*}
By definition,
$$
c_0^{12}= \int_{B_R} \left[ B[\phi_2^1 - \phi_2^1]  +   \mathcal N (\psi + \eta_\delta (\phi_1+ \phi_2^1)  ) - \mathcal N (\psi + \eta_\delta (\phi_1+ \phi_2^2)  ) \right] Z_0.
$$
Using \eqref{defB} and \eqref{house2}, we get
\begin{align*}
|c_0^{12} | &\leq C ( \int_{B_R} {\ve \over 1+|y|^2} \, dy ) \| \phi_2^1 - \phi_2^2\|_{**, \alpha} + C  \| \phi_2^1 - \phi_2^2\|_{**, \alpha}^2
\end{align*}
Thus we can conclude that
\begin{align*}
|A[\phi_2^1-\phi_2^2]|& \leq  C \left(|L_x (\eta_\delta ) |\phi_2^1 - \phi_2^2| + |K_{ij} \pp_i \eta_\delta \pp_j (\phi_2^2 - \phi_2^2) | \right) \\
&\leq { C \ve^\sigma \over 1+ |x|^\nu } \| \phi_1^1 - \phi_1^2 \|_{**, \alpha} .
\end{align*}
for some $\sigma >0$. Combining all these estimates in \eqref{estGG}, we obtain that
\begin{align*}
\left| G(\varphi^1, \alpha_1^1 ) - G(\varphi^2, \alpha_1^2 )  \right|&\leq
{C\ve^\sigma \over 1+|x|^\nu} \Bigl( \| \psi^1-\psi^2\|_\infty + \| \phi_1^1 - \phi_1^2 \|_{*,m-2, B_R} \\
& + \| \phi_1^1 - \phi_1^2 \|_{**,\alpha} + |\alpha_1^1 - \alpha_1^2 |\Bigl),
\end{align*}
which, in view of Proposition \ref{prop2}, gives
\begin{align*}
\| \TT^o [G(\varphi^1, \alpha_1^1 )] - \TT^o [G(\varphi^2, \alpha_1^2 )]  \|_\infty &\leq
C\ve^\sigma  \Bigl( \| \psi^1-\psi^2\|_\infty + \| \phi_1^1 - \phi_1^2 \|_{*,m-2, B_R} \\
& + \| \phi_1^1 - \phi_1^2 \|_{**,\alpha} + |\alpha_1^1 - \alpha_1^2 |\Bigl),
\end{align*}
for some $\sigma >0$.
Carrying out a similar analysis for each one of the operators in \eqref{fixp}, we get that ${\mathcal A}$ is a contraction mapping in $B_M$.

As a consequence,  Problem \eqref{fix}-\eqref{fixp} has a fixed point, which concludes the proof of Theorem \ref{teo1}. \qed

\section{Proof of Theorem \ref{teo2}}

The proof of Theorem \ref{teo2} is similar to that of Theorem \ref{teo1}, by employing the spherical symmetry of the equation (\ref{PB1}), which can be written in polar coordinates as
 \begin{equation}
     \label{PB100}
 {1\over h^2 + r^2} \left( {h^2 \over r^2} +1 \right) \pp_\theta^2 \Psi + {h^2 \over r} \pp_r \left( {r \over h^2 + r^2} \pp_r\Psi \right)=\ve^2 e^{(\Psi -{\alpha \over 2} |\log \ve | |x|^2 )} \quad {\mbox {in}} \quad \R^2.
\end{equation}

It is clear that equation (\ref{PB100}) is invariant under rotational symmetry $\theta \to \theta +\frac{2\pi}{k}$ and even symmetry $\theta \to -\theta$.

To construct multiple concentration solutions with polygonal symmetry, we work in the following space
\begin{equation}
\label{Xk} \mathcal{X}_k = \{ \Psi \in L^\infty_{loc} (\R^2) \ | \ \Psi ( z e^{\sqrt{-1} \frac{2\pi}{k}})=\Psi (z), \Psi (\bar{z})=\Psi (z), \ z =(z_1, z_2) \in \R^2  \}
\end{equation}
where $ k\geq 2$ is an integer.

As in the proof of Theorem \ref{teo1} we modify the approximate solution as follows: let $ \bar{H}_{1\ve} (z)=\Gamma_\ve (z)  \left( 1+ c_1 z_1 + c_2 |z|^2 \right) +{4 x_0^3 \over 1+ x_0^2} H_{1\ve} (z)$, where $\Gamma_\ve (z), c_1, c_2, x_0, H_{1\ve} (z)$ are defined in Section 3. Then the new approximate solution is
\begin{equation*}
%\label{psifinal100}
\begin{aligned}
\Psi_{ \alpha}  (x) &=  {\alpha \over 2} |\log \ve | x_0^2 - \log (1+ x_0^2)  \\
& \quad +    \sum_{j=1}^k \eta_\delta (x- x_0 e^{\sqrt{-1} \frac{2(j-1)\pi }{k}} ) \hat{H}_{1\ve} (z e^{\sqrt{-1} \frac{2(j-1)\pi}{k}} )   + H_{2\ve}  (x) \in \mathcal{X}_k .
 \end{aligned}
\end{equation*}
 Using the symmetry assumption (\ref{Xk}), the rest of the proof is exactly the same  as in that of Theorem \ref{teo1}. We omit the details.

\medskip\noindent{\bf Acknowledgments:}
J.~D\'avila has been supported  by  a Royal Society  Wolfson Fellowship, UK and  Fondecyt grant 1170224, Chile.
 M.~del Pino has been supported by a Royal Society Research Professorship, UK.
 M. Musso has been supported by EPSRC research Grant EP/T008458/1.
 The  research  of J.~Wei is partially supported by NSERC of Canada.

\end{document}